\newcommand{\overbar}[1]{\mkern 1.5mu\overline{\mkern-1.5mu#1\mkern-1.5mu}\mkern 1.5mu}
\theoremstyle{plain}
\newtheorem{theorem}{Theorem}[section]
\newtheorem{corollary}[theorem]{Corollary}
\newtheorem{lemma}[theorem]{Lemma}
\theoremstyle{definition}
\theoremstyle{Conjecture}
\newcommand{\Spvek}[2][r]{%
	\gdef\@VORNE{1}
	\left(\hskip-\arraycolsep%
	\begin{array}{#1}\vekSp@lten{#2}\end{array}%
	\hskip-\arraycolsep\right)}
\def\vekSp@lten#1{\xvekSp@lten#1;vekL@stLine;}
\def\vekL@stLine{vekL@stLine}
\def\xvekSp@lten#1;{\def\temp{#1}%
	\ifx\temp\vekL@stLine
	\else
	\ifnum\@VORNE=1\gdef\@VORNE{0}
	\else\@arraycr\fi%
	#1%
	\expandafter\xvekSp@lten
	\fi}
\begin{document}
	
	\title{Lebesgue constants for Cantor sets.}

\author{Alexander Goncharov}
\author{Yaman Paksoy}

	\address{Department of Mathematics, Bilkent University, 06800 Ankara, Turkey}
	\email{goncha@fen.bilkent.edu.tr,\,\,\,yamanpaks@hotmail.com}
	
	\subjclass[2010]{ 41A05 \and 41A44 }
	\keywords{Lebesgue constants, Cantor sets}
\thanks{The research was partially supported by T\"{U}B\.{I}TAK
(Scientific and Technological Research Council of Turkey), Project 119F023.}

\begin{abstract}
 We evaluate the values of the Lebesgue constants in polynomial interpolation for three types of Cantor sets. In all cases, the sequences of  Lebesgue constants are not bounded. This disproves the statement by Mergelyan.
 \end{abstract}
		\maketitle

	\section{\bf Introduction}

An analysis of the Lebesgue constants $\Lambda_N$ is of fundamental importance in Approximation Theory.
By Lebesgue's Lemma (see, e.g., \cite{CA}, Chapter 2, Prop. 4.1), the accuracy of approximating functions by interpolation is closely related to the size of
$\Lambda_N $.

Given compact set $K \subset \Bbb R$, we consider an array $X=(X_N)_{N=1}^{\infty}=(x_{k,N})_{k=1, N=1}^{N, \infty}$ of interpolating nodes from $K$
and the corresponding Lebesgue constants $\Lambda_N(X_N),\  N=1,2,\ldots,$ (see Section 2 for the definitions).
We say that $K$  belongs to a family ${\mathcal{BLC}}$ (Bounded Lebesgue constants) if there
is an array $X$ of points from $K$ such that the sequence $(\Lambda_N(X_N))_{N=1}^{\infty}$ is bounded. By the classical Faber paper \cite{F},
an interval does not belong to ${\mathcal{BLC}}$. This result  was extended by Erd\"os and P. V\'ertesi  \cite{EV} (see also Cor. 7 in \cite{BDP}): If $K$ belongs to ${\mathcal{BLC}}$, then $K$ is nowhere dense and its one-dimensional Lebesgue measure is zero.

For countable sets, Obermaier proved in  \cite{OB} that geometric progressions belong to ${\mathcal{BLC}}$, whereas Privalov presented in  \cite{PR} a countable set outside ${\mathcal{BLC}}$.

We are interested in the problem: are there perfect sets in the class ${\mathcal{BLC}}$? By \cite{EV}, if such a set exists, it must be of Cantor type, with zero Lebesgue measure. Usual references here are Korovkin  \cite{KOR} and Mergelyan  \cite{M}.
Korovkin constructed a perfect set $K$ and an array $X$ of points from $K$ such that the subsequence $(\Lambda_{N^2}(X_{N^2}))_{N=1}^{\infty}$ is bounded.
In Sections 4 and 6 we present Cantor sets with bounded subsequences of the Lebesgue constants. This allows us to approximate continuous functions
on such sets with the help of Lagrange interpolation polynomials (see e.g. \cite{SV}, p. 161), but does not guarantee the convergence of the Newton interpolation
process, which includes polynomials of all degrees and, accordingly, the existence of Faber interpolating bases (for details, see e.g. \cite{BDP}).

In 1951, Mergelyan published the book \cite{M} containing his classical results in the theory of complex approximation. In one of his supplementary theorems,
the author asserted that some Cantor sets belong to ${\mathcal{BLC}}$. We consider his proof in Section 3 and show that it is not correct. Moreover, we put forward the following\\

{\bf Conjecture}. There are no perfect sets in the class ${\mathcal{BLC}}$.\\

In support of this conjecture,  we present in Section 5 Cantor sets outside ${\mathcal{BLC}}$. Our results are consistent with numerical experiments from \cite{GGP}.

\section{\bf Notations}

Let $K$ be a perfect (i.e. closed with no isolated points), bounded subset of $\Bbb R$ and $X=(x_{k,N})_{k=1, N=1}^{N, \infty}$ be an infinite triangular matrix of points from $K$ such that
each row $X_N$ consists of distinct elements. For a fixed $N$, the points of $X_N$  determine the polynomial $\omega_{N}(x) = \prod_{k=1}^{N}(x-x_{k,N})$,
the fundamental Lagrange polynomial $l_{k,N}(x)= \frac{\omega_{N}(x)}{(x-x_{k,N})\omega'_{N}(x_{k,N})}$, the Lebesgue function
$\lambda_N(x)=\sum _{k=1}^{N}|l_{k,N}(x)|$ and the Lebesgue constant $\Lambda_N(X_N, K)=\sup_{x\in K} \lambda_N(x).$ Given function $f$
defined on $K$, by $L_N(f,x ;X_N) = \sum _{k=1}^{N}f(x_{k,N}) l_{k,N}(x)$
we denote the corresponding Lagrange interpolating polynomial, by $L_N(\cdot, X_N)$ the interpolating projection from $C(K)$ (the space of continuous functions on $K$) to its finite dimensional subspace of polynomials of degree at most $N-1$.
It is easy to show that $\Lambda_N(X_N, K)$ is the operator norm of the projection $L_N(\cdot, X_N)$.\\

{\bf Remarks. 1.} We follow notations from \cite{SV}, where the index $N$ of the Lebesgue constant corresponds to the number of interpolating points,
but not \cite{RIV}, where $N$ is the degree of the interpolating polynomial. Thus, here, the degree of $l_{k,N}$ and $L_N(f)$ is $N-1.$\\
{\bf 2.} Our definition of the class ${\mathcal{BLC}}$ is different from the analogous definition in \cite{BDP}, where any arrays of points from $[a,b]$
are allowed, where $[a,b]$ is the minimal interval containing $K$. We consider a more restricted class, since our main interest lies in the existence of Faber
interpolation bases in various spaces of functions defined on $ K $.\\

In all sections, except Section 6, we consider geometrically symmetric Cantor sets, where, during the Cantor procedure, all intervals of the same level have
the same length. Let $(\ell_{s})_{s=0}^{\infty}$ be a sequence such that $ \ell_{0}=1$ and
\begin{equation}\label{l}
3 \ell_{s+1} \,\leq\,\ell_{s}\,\,\,\,\mbox{for} \,\,\,\, s \in \Bbb N_0.
 \end{equation}
Let $ K$ be the Cantor set associated with this sequence, that is $ K=\bigcap_{s=0}^{\infty} E_{s},$ where $E_{0}=I_{1,0}= [0,1],\,E_{s}$ is a union of
$2^{s}$ closed {\it basic} intervals $I_{j,s}$ of length $\ell_{s}$ and $E_{s+1}$ is obtained by replacing each $I_{j,s}\,\,,j=1,2,...2^{s},$ by two
{\it adjacent} subintervals $I_{2j-1,s+1}$ and $I_{2j,s+1}$. Let $h_{s}=\ell_{s}-2\,\ell_{s+1}$ be the distance between them.
By \eqref{l}, $h_s \geq \frac{1}{3} \ell_s$ for each $s \in \Bbb N_0$.\\

Let $Y_0:=\{0, 1\}$ and, for $k\in \Bbb N,$ let $Y_k$ be the set of endpoints of intervals from $E_k$.
For example, $Y_1:=\{0,\ell_1, 1-\ell_1 , 1\}, \ Y_2:= Y_1 \cup \{\ell_2, \ell_1-\ell_2, 1-\ell_1+\ell_2, 1-\ell_2\}.$   Thus
$\#(Y_s)=2^{s+1}.$ Here and below, $\#(Z)$ denotes the cardinality of a finite set $Z$.

Suppose we are given a set $Z=(z_k)_{k=1}^{N}\subset K$. Let $m_{j,s}(Z):=\#(Z\cap I_{j,s}).$ We say that an interval
$I_{j,s}$ is {\it empty} if  $m_{j,s}(Z)=0.$ We also say that points are {\it uniformly distributed} on $K$  and denote this by $Z\in \mathcal{U}$ if
for each $k\in \Bbb N$ and $i,j \in \{1,2, \ldots, 2^k\}$ we have
$$|m_{i,k}(Z)- m_{j,k}(Z)|\leq 1.$$
 Thus, if $2^{s-1}\leq N< 2^s$ and points of $Z$ are uniformly distributed, then $m_{i,s}(Z)\in \{0, 1\}$ for each $i$. 

Also, for a fixed $x\in \Bbb R$, by $d_k(x,Z)$ we denote the distances $|x-z_{j_k}|$
from $x$ to points of $Z$, where these distances are arranged in the nondecreasing order, so that $d_i(x,Z) \leq d_{i+1}(x,Z)$ for $i = 1,2,...,N-1$. In what follows, we omit the argument $ Z $ in $m_{j,s}(Z), d_k(x,Z)$, as well as $N$ in
$l_{k,N}$ if this does not cause misunderstanding.\\

For a fixed set $Z$ and a basic interval $I_{j,q}$ with $m_{j,q}\geq 2,$ let
\begin{equation}\label{R}
R_{j,q} :=\max\{R:\,\,\,\,\mbox{there exists} \,\,\,\,I_{i,R}\subset I_{j,q} \,\,\,\,\mbox{with} \,\,\,\, m_{i,R}=2\}.
 \end{equation}

Thus, $R_{j,q}\geq q$ and $m_{j,q}\leq 2^{R_{j,q}-q+1}$ since $I_{j,q}$ contains $ 2^{R_{j,q}}$ subintervals of the $R_{j,q}-$th level.
Similarly, if $2^k+1\leq m_{j,q}\leq 2^{k+1}$ then $R_{j,q}\geq q+k.$

\begin{lemma}\label{RR}
Suppose $Z=(z_k)_{k=1}^{2^s-1}\in \mathcal{U}.$ Then $R_{1,0}=s-1.$ If $Z\notin \mathcal{U}$ then $R_{1,0}\geq s.$
\end{lemma}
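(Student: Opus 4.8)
The plan is to pin down $R_{1,0}$ by looking at how $Z$ is spread among the basic intervals of levels $s-1$ and $s$. Throughout, $N=2^s-1$, so $2^{s-1}\le N<2^s$, and we may take $s\ge 2$ (for $s=1$ the statement is degenerate).

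First I would treat the case $Z\in\mathcal U$. Since $2^{s-1}\le N<2^s$, the observation recorded just before the lemma gives $m_{i,s}\in\{0,1\}$ for every $i$; and whenever $I_{i,R}\subset I_{j,s}$ with $R\ge s$ one has $Z\cap I_{i,R}\subseteq Z\cap I_{j,s}$, so $m_{i,R}\le m_{j,s}\le 1$. Thus no basic interval of level $\ge s$ contains exactly two points of $Z$, and by \eqref{R} this gives $R_{1,0}\le s-1$. For the matching lower bound I would look one level up: the $2^{s-1}$ counts $m_{i,s-1}$ sum to $2^s-1$, and uniformity forces each of them into $\{1,2\}$; if $a$ of them equal $2$ then $2a+(2^{s-1}-a)=2^s-1$, hence $a=2^{s-1}-1\ge 1$. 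So some $I_{i,s-1}$ has $m_{i,s-1}=2$, giving $R_{1,0}\ge s-1$, and therefore $R_{1,0}=s-1$.

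For the case $Z\notin\mathcal U$ I would first isolate an elementary reduction: if a basic interval $I_{j,q}$ has $m_{j,q}\ge 2$, then some basic subinterval $I_{i,R}\subseteq I_{j,q}$ satisfies $m_{i,R}=2$. Indeed, since the $\ell_R$ tend to $0$ and the $z_k$ are finitely many and distinct, for large $R$ every basic interval holds at most one $z_k$; choose $R$ maximal for which some basic subinterval of $I_{j,q}$ has count $\ge 2$ and let $I_{i,R}$ be such a subinterval. Its two children $I_{2i-1,R+1},I_{2i,R+1}$ then have counts $\le 1$, and (as $Z\subseteq K$ avoids the gap between them) these two counts sum to $m_{i,R}$, forcing $m_{i,R}=2$. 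Consequently $R_{1,0}\ge s$ as soon as some interval of level $s$ contains at least two points of $Z$. It remains to exclude the alternative $m_{i,s}\le 1$ for all $i$: then, since $N=2^s-1$, exactly one level-$s$ interval is empty and each of the others carries one point; tracing the ancestors $I_{j_k,k}$ of the empty one, we get $m_{j_k,k}=2^{s-k}-1$ and $m_{i,k}=2^{s-k}$ for every other level-$k$ interval when $k<s$, while all counts lie in $\{0,1\}$ when $k\ge s$ — in every case $|m_{i,k}-m_{j,k}|\le 1$, i.e. $Z\in\mathcal U$, contradicting the hypothesis. Hence some level-$s$ interval has at least two points and $R_{1,0}\ge s$.

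Most of this is bookkeeping; the one step that takes a moment's thought is this last implication, namely that for a set of exactly $2^s-1$ points, ``at most one point in every interval of level $s$'' already forces $|m_{i,k}-m_{j,k}|\le 1$ at \emph{all} levels $k$. In the $Z\in\mathcal U$ subcase the only thing to watch is to argue the lower bound at level $s-1$ rather than level $s$. The geometric hypothesis $3\ell_{s+1}\le\ell_s$ is not needed here.
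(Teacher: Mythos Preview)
Your proof is correct and follows essentially the same counting strategy as the paper. The only real difference is a matter of bookkeeping level: for the non-uniform case the paper assumes $R_{1,0}\le s-1$, argues that $R_{1,0}=s-1$ forces $m_{j,s-1}\in\{0,1,2\}$, and then solves the linear system $k_0+k_1+k_2=2^{s-1}$, $k_1+2k_2=2^s-1$ to get $k_0=0$, $k_1=1$, asserting that this already gives $Z\in\mathcal U$. You instead work one level deeper, reducing to ``all $m_{i,s}\le 1$'' and then explicitly tracing the ancestor counts to verify uniformity at every level. Your version is slightly more self-contained: you spell out the descent step (if $m_{j,q}\ge 2$ then some deeper interval has count exactly $2$) that the paper uses implicitly, and your ancestor-tracing fills in the justification for ``which means $Z\in\mathcal U$'' that the paper leaves to the reader.
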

\begin{proof}
The interval $I_{1,0}$ contains $2^s$ subintervals of the $s-$th level. Hence at least one of them is empty. If $Z\in \mathcal{U}$ then
$m_{i,s}\leq 1$ for all $i$. Hence, $R_{1,0}\leq s-1.$ There are $2^{s-1}$ subintervals of the $s-1-$st level with $m_{j,s-1}\leq 2$ for all $j.$
Clearly, at least one $I_{j,s-1}$ contains two points of $Z$. Thus the value $R$ corresponding to the whole set $K$ must be $s-1.$

On the other hand, if $Z\notin \mathcal{U}$ then $R_{1,0}\geq s.$ In the contrary case, let $R_{1,0}\leq s-1.$ The values $R_{1,0}\leq s-2$
are impossible since $2^R$ intervals of $R-$th level can contain at most $2^{R+1}$ points of $Z$. Hence $R_{1,0}=s-1.$ Let $k_j$ with
$0\leq j \leq 2$ be the  number of intervals of the $s-1-$st level containing $j$ points of $Z$. Then $k_0+k_1+k_2=2^{s-1}$ and $k_1+2 k_2=2^s-1.$
It follows that $k_2=2^{s-1}-k_0-k_1$ and $2^s-1=2^s-k_1-2k_0.$ Hence $k_1+2 k_0=1$ with $k_0=0, k_1=1,$ which means $Z\in \mathcal{U},$ a contradiction.
\end{proof}

Given $x\in K\cap I_{j,R}$ we will use the chain of basic intervals containing it:
\begin{equation}\label{x}
x\in I_{j,R} \subset  I_{j_1,R-1} \subset I_{j_2,R-2} \subset \cdots \subset I_{j_R,0}=[0,1].
\end{equation}
Let $J_{R}$ and $I_{j,R}$ be the  adjacent subintervals of $I_{j_1,R-1}$ and, more generally, $J_n:=(I_{j_{R-n+1},n-1}\setminus I_{j_{R-n},n})\cap E_n$
for $1\leq n \leq R-1.$ If, in addition, the set $Z$ be given, then $\nu_n:=\#(J_n \cap Z).$\\

We consider three types of sets. The first is $K_{\beta}$ with $0<\beta \leq 1/3$, denoting the set given by the sequence $\ell_s=\beta \ell_{s-1}$ for
$s\in \Bbb N.$ Thus,  $\ell_s=\beta^s$ and $K_{1/3}$ stands for the classical Cantor ternary set.

The second type of sets is $K^{\alpha}$ with $1<\alpha$, $\ell_1\leq 1/3$ satisfying $\alpha \geq (\log \ell_1 - \log3) / \log \ell_1$ and $\ell_{s+1}= \ell_s^\alpha = \ell_1^{\alpha^s}$ for  $s \in \Bbb N.$ 

The third family of sets will be given in Section 6.

We follow \cite{GS} to define $\ell-regular$ Cantor sets as sets for which
$$ \ell^2_{s+1}\geq \ell_{s}\ell_{s+2}\,\,\,\,\,\,\mbox{for \,\,\,} s \in \Bbb N. $$
For such sets the value $\frac{h_s}{\ell_s}$ increases. In particular, the sets  $K^{\alpha}$ are $\ell$-regular.\\

Let $[a]$ denote the greatest integer in $a, \log$ stand for natural logarithm.

\section{\bf Sets $K_{\beta}$ and Mergelyan's result}

The first choice of interpolation nodes that comes to mind is the set $Y_{s-1}$ consisting of all endpoints of intervals in Cantor procedure
up to the level $s-1$. However, in the cases when $K=K_{\beta}$ or $K=K^{\alpha}$ with $\alpha<2$, the sequence $(\Lambda_{2^s}(Y_{s-1}, K))_{s=1}^{\infty}$
is not bounded. What is more, only one fundamental polynomial at a certain point takes as large values as we wish for large $s$.

The next lemma is valid for any geometrically symmetric Cantor set $K$. We arrange points from $ Y_{s-1}$ in ascending order.
Then $x_1=0, x_2=\ell_{s-1},\ldots, x_{2^s}=1.$ We select $k=2^{s-1}-1$ with $x_k=\ell_{1}-\ell_{s-1}.$
Also, let ${\tilde x}:=\ell_s.$

\begin{lemma}\label{Y}
Given $s\geq 3,$ let $Y_{s-1}$ be the interpolating set for $K$ and $k, {\tilde x}$ be as above. Then
$$|l_k({\tilde x})|\geq \ell_s\,\left( \frac{1-\ell_s}{1-\ell_1+ \ell_{s-1}}\right)^{2^{s-1}}.$$
\end{lemma}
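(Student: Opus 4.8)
We want a lower bound on $|l_k(\tilde x)|$ where $l_k$ is the fundamental Lagrange polynomial for the node $x_k = \ell_1-\ell_{s-1}\in Y_{s-1}$, evaluated at $\tilde x=\ell_s$. By definition,
$$
l_k(\tilde x) \;=\; \prod_{\substack{i=1\\ i\neq k}}^{2^s}\frac{\tilde x - x_i}{x_k - x_i},
$$
so the whole game is to estimate this product of ratios over the $2^s-1$ nodes $x_i\in Y_{s-1}$, $i\neq k$. The plan is to compare the two products $\prod_{i\neq k}|\tilde x - x_i|$ and $\prod_{i\neq k}|x_k - x_i|$ factor by factor, using the geometric structure of $Y_{s-1}$.

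First I would record the gross features: $\tilde x=\ell_s$ lies in the leftmost basic interval $I_{1,s-1}=[0,\ell_{s-1}]$ together with the two nodes $x_1=0$ and $x_2=\ell_{s-1}$; meanwhile $x_k=\ell_1-\ell_{s-1}$ is the right endpoint of the basic interval $I_{2,s-1}=[\ell_1-\ell_{s-1},\ell_1]$ adjacent (at level $1$) to the first block, so $x_k$ is far ($\approx \ell_1$) from the cluster around $\tilde x$. The two "special" factors are the one coming from $x_1=0$, which contributes $|\tilde x - x_1| = \ell_s$ in the numerator, and the factor $x_k - x_k$ is of course omitted. So I would peel off the $i=1$ term, getting a clean $\ell_s$ out front, and then argue that the remaining product $\prod_{i\neq 1,k}\frac{|\tilde x-x_i|}{|x_k-x_i|}$ is at least $\big(\frac{1-\ell_s}{1-\ell_1+\ell_{s-1}}\big)^{2^{s-1}}$.

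The key monotonicity step: for every node $x_i$ with $i\neq 1,k$, I claim
$$
\frac{|\tilde x - x_i|}{|x_k - x_i|}\;\ge\;\frac{1-\ell_s}{\,1-\ell_1+\ell_{s-1}\,}.
$$
To see this, note $\tilde x=\ell_s$ and $x_k=\ell_1-\ell_{s-1}$ are both in $[0,\ell_1]$, whereas the bulk of the remaining nodes lie in $[\ell_1,1]$ or are the two small nodes $\ell_{s-1}, \ell_2,\dots$ near the left. For a node $x_i\ge \ell_1$ (there are exactly $2^{s-1}$ such nodes, the right half of $Y_{s-1}$), the numerator $|\tilde x - x_i| = x_i - \ell_s \ge \ell_1-\ell_s$... — more usefully one checks the function $t\mapsto \frac{x_i - t}{x_i - t'}$ type comparison and reduces everything to the extreme ratio, i.e. replace each numerator distance by its smallest possible value $1-\ell_s$ (distance across the whole set, scaled) and each denominator distance by its largest, $1-\ell_1+\ell_{s-1}$ (the length of $[x_k,1]$ plus a bit, i.e. $\mathrm{diam}$ of the relevant span). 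Since there are $2^{s-1}$ nodes in the far block and the contribution of the remaining left-cluster nodes only helps (there $|\tilde x-x_i|$ is comparable to or larger than $|x_k-x_i|$), raising the single worst ratio to the power $2^{s-1}$ gives the bound. I would make this precise by splitting $Y_{s-1}\setminus\{x_1,x_k\}$ into the $2^{s-1}$ nodes with $x_i\ge \ell_1$ and the remaining $2^{s-1}-2$ nodes inside $[0,\ell_1]$, handle the second group by showing each of its ratios is $\ge 1$ (since $\tilde x$ is at least as close to every such node as $x_k$ is, by the geometry of nested intervals and $3\ell_{s+1}\le\ell_s$), and handle the first group by the uniform lower bound $\frac{1-\ell_s}{1-\ell_1+\ell_{s-1}}$ on each ratio.

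The main obstacle I anticipate is the careful bookkeeping of which nodes lie where and verifying the "every left-cluster ratio is $\ge 1$" claim cleanly: one must be sure that for each node $x_i\in (0,\ell_1)$ other than $x_k$ we genuinely have $|\ell_s - x_i|\ge |(\ell_1-\ell_{s-1}) - x_i|$, which needs the separation hypothesis \eqref{l} and a short case analysis on the level at which $x_i$ sits (nodes at level $s-1$ inside $I_{1,s-1}$ versus endpoints of $I_{1,1}=[0,\ell_1]$ etc.). Once that is in place, the product estimate is immediate and the stated inequality follows. I would finish by remarking that since $\frac{1-\ell_s}{1-\ell_1+\ell_{s-1}}\to \frac{1}{1-\ell_1}>1$ as $s\to\infty$, the right-hand side grows, forcing $\Lambda_{2^s}(Y_{s-1},K)\to\infty$ — which is the point of the lemma.
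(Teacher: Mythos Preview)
Your handling of the nodes in $I_{2,1}$ (the right half) is correct and matches the paper's estimate of $\pi_2$: each ratio $\frac{x_i-\tilde x}{x_i-x_k}$ is decreasing in $x_i$, so its minimum over $x_i\in[1-\ell_1,1]$ is $\frac{1-\ell_s}{1-\ell_1+\ell_{s-1}}$, and raising to the power $2^{s-1}$ gives the stated factor.

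The gap is in the left cluster. Your key claim, that for every $x_i\in(0,\ell_1)$ with $i\neq k$ one has $|\tilde x-x_i|\ge|x_k-x_i|$, is false. Take $x_2=\ell_{s-1}$: then $|\tilde x-x_2|=\ell_{s-1}-\ell_s$ while $|x_k-x_2|=\ell_1-2\ell_{s-1}$, and the ratio is well below $1$ (for $K_{1/3}$ and $s=3$ it equals $2/3$). More generally, nodes near $\tilde x$ give small ratios and nodes near $x_k$ give large ones; neither inequality holds uniformly. Your parenthetical justification (``$\tilde x$ is at least as close to every such node as $x_k$ is'') would in any case yield ratios $\le 1$, not $\ge 1$, so the plan is internally inconsistent as written. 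Note also that peeling off $i=1$ gives $\ell_s/(\ell_1-\ell_{s-1})$, not a clean $\ell_s$; the missing denominator must be recovered from the remaining left-cluster factors, which your termwise scheme cannot do.

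The paper resolves this with a shift/telescoping device rather than a termwise bound. Writing $\pi_1=\prod_{j}d_j(\tilde x)/d_j(x_k)$ with distances sorted increasingly, one uses the reflection symmetry of $Y_{s-1}\cap I_{1,1}$ about $\ell_1/2$ to check that $d_{j+1}(\tilde x)=d_j(x_k)+(\ell_{s-1}-\ell_s)$ for $2\le j\le 2^{s-1}-3$. Hence the middle of the product telescopes to something $>1$, and only four boundary terms survive; these are estimated directly and give $\pi_1>\ell_s$. Some pairing or reordering argument of this kind is essential, since the individual left-cluster factors range on both sides of $1$.
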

\begin{proof}

We have $$ | l_k ({\tilde x})| = \prod \limits_{\substack{j=1 \\  j \neq k}}^{2^{s-1}} \left| \dfrac{{\tilde x} - x_j}{x_k - x_j}\right|\cdot
 \prod \limits_{j=2^{s-1}+1}^{2^{s}} \left|  \dfrac{{\tilde x} - x_j}{x_k - x_j} \right|=: \pi_1\cdot \pi_2.$$
Let us obtain the lower bounds of these two separately.
Notice that $\pi_1$ corresponds to the product of ratios of distances of $\tilde x$ and $x_k$ to the nodes from the set $Y_{s-1}\cap I_{1,1}\setminus\{x_k\}.$
This set contains $2^{s-1}-1$ points and for the corresponding $d_j(\tilde x)$ we have: $d_1(\tilde x)=\ell_{s}, \ d_2(\tilde x)=\ell_{s-1}-\ell_{s}$.

For $2\leq j\leq 2^{s-1}-3$ we have $d_{j+1}(\tilde x)=d_j(x_k)+\varepsilon$ with $\varepsilon=\ell_{s-1}-\ell_{s}.$ Indeed, for such $j$ we have $d_j(x_k)=d_j(x_2)$ and $x_2-\tilde x=\varepsilon.$
Also, $d_1(x_k)=\ell_{s-1}, \ d_{2^{s-1}-2}(x_k)=x_k-x_2=\ell_1-2\ell_{s-1}, \  d_{2^{s-1}-1}(x_k)=\ell_1-\ell_{s-1}$ and $d_{2^{s-1}-1}(\tilde x)=\ell_1-\ell_{s}.$
Therefore,
\begin{equation}\label{pi1}
\pi_1= \frac{d_1(\tilde x)\,d_2(\tilde x)}{d_1(x_k)} \cdot \left(\prod_{j=2}^{2^{s-1}-3}\frac{d_{j+1}(\tilde x)}{d_j(x_k)} \right) \cdot  \frac{d_{2^{s-1}-1}(\tilde x)} {d_{2^{s-1}-2}(x_k) d_{2^{s-1}-1}(x_k)}.
\end{equation}

We neglect the product in the middle, as all its terms are greater than one. Hence,
$$ \pi_1>  \frac{\ell_{s}(\ell_{s-1}-\ell_{s})}{\ell_{s-1}} \cdot\frac{\ell_1-\ell_{s}}{(\ell_1-2\ell_{s-1})(\ell_1-\ell_{s-1})}>
\frac{\ell_{s}(\ell_{s-1}-\ell_{s})}{ \ell_{s-1}\,\ell_1}.$$
The last fraction exceeds $\ell_{s},$  as is easy to check.

As for $\pi_2,$ that is the product of ratios of distances to the nodes in $I_{2,1}$, we have $d_j(\tilde x) = d_j(x_k) + x_k-\tilde x$ for
$j = 2^{s-1}+1,...,2^{s}$.
Here, $x_k-\tilde x=\ell_1-\ell_{s-1}-\ell_{s}$ and $d_j(x_k)\leq 1-x_k=1-\ell_1+\ell_{s-1}.$ 
Hence,
\begin{equation}\label{pi2}
 \pi_2=\prod_{j=2^{s-1}+1}^{2^{s}} \left(1+\frac{x_k-\tilde x}{d_j(x_k)}\right) \geq \prod_{j=2^{s-1}+1}^{2^{s}}
\left(1+\frac{\ell_1-\ell_{s-1}-\ell_{s}}{1-\ell_1+\ell_{s-1}}\right)= \left( \frac{1-\ell_{s}}{1-\ell_1+ \ell_{s-1}}\right)^{2^{s-1}}.
\end{equation}
Combining these two together, we get the result.
\end{proof}

\begin{theorem}\label{beta}
For any set  $K_{\beta}$ with $0<\beta \leq 1/3$ we have $\Lambda_{2^{s}}(Y_{s-1}, K_{\beta})\to \infty$ as $s \to \infty.$
\end{theorem}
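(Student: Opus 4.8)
The plan is to deduce Theorem \ref{beta} from Lemma \ref{Y} by showing that, for $K=K_\beta$, the explicit lower bound on $|l_k(\tilde x)|$ diverges as $s\to\infty$. Since $\Lambda_{2^s}(Y_{s-1},K_\beta)=\sup_{x\in K_\beta}\lambda_{2^s}(x)\ge \lambda_{2^s}(\tilde x)\ge |l_k(\tilde x)|$, it suffices to prove that the quantity
\[
\ell_s\left(\frac{1-\ell_s}{1-\ell_1+\ell_{s-1}}\right)^{2^{s-1}}
\]
tends to infinity when $\ell_s=\beta^s$.

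\begin{proof}
By definition, $\Lambda_{2^s}(Y_{s-1},K_\beta)\ge \lambda_{2^s}(\tilde x)\ge |l_k(\tilde x)|$, so by Lemma \ref{Y} it is enough to show that
\[
A_s:=\ell_s\left(\frac{1-\ell_s}{1-\ell_1+\ell_{s-1}}\right)^{2^{s-1}}\longrightarrow\infty\quad(s\to\infty),
\]
where $\ell_s=\beta^s$ with $0<\beta\le 1/3$, so $\ell_1=\beta$.

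Write $A_s=\exp\bigl(\log\ell_s+2^{s-1}\log\tfrac{1-\ell_s}{1-\ell_1+\ell_{s-1}}\bigr)$. The first term is $\log\ell_s=s\log\beta$, which is linear in $s$ and negative. For the second term, since $\ell_s\to 0$ and $\ell_{s-1}\to 0$, we have $\tfrac{1-\ell_s}{1-\ell_1+\ell_{s-1}}\to\tfrac{1}{1-\beta}>1$; more precisely,
\[
\log\frac{1-\ell_s}{1-\ell_1+\ell_{s-1}}=\log\frac{1}{1-\beta}+o(1)\quad(s\to\infty),
\]
using $\log(1-\ell_s)=O(\ell_s)$ and $\log(1-\ell_1+\ell_{s-1})=\log(1-\beta)+O(\ell_{s-1})$. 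Since $\log\frac{1}{1-\beta}$ is a fixed positive constant, the term $2^{s-1}\log\tfrac{1-\ell_s}{1-\ell_1+\ell_{s-1}}$ grows like $2^{s-1}\log\frac{1}{1-\beta}$, which dominates the linear term $s\log\beta$. Hence $\log A_s\to\infty$, so $A_s\to\infty$, and therefore $\Lambda_{2^s}(Y_{s-1},K_\beta)\to\infty$.
\end{proof}

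The only mildly delicate point is the asymptotic estimate of the logarithm of the base $\tfrac{1-\ell_s}{1-\ell_1+\ell_{s-1}}$: one must check that it stays bounded away from $1$ (equivalently, that its logarithm stays bounded below by a positive constant) for all large $s$, so that multiplying by $2^{s-1}$ produces genuine exponential growth. This is immediate once one observes $\ell_s,\ell_{s-1}\to 0$, so in fact for $s$ large the base exceeds, say, $\tfrac{1}{1-\beta/2}>1$, and the exponential term overwhelms the factor $\ell_s=\beta^s$ with room to spare. No other obstacle arises; the argument is a short consequence of Lemma \ref{Y}.
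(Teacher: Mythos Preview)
Your proof is correct and follows essentially the same approach as the paper: both apply Lemma \ref{Y} and then show the resulting lower bound $\ell_s\bigl(\tfrac{1-\ell_s}{1-\ell_1+\ell_{s-1}}\bigr)^{2^{s-1}}$ diverges. The only cosmetic difference is that the paper first proves the explicit inequality $(1-\beta^s)(1-\beta^2)>1-\beta+\beta^{s-1}$ for $s\ge 3$, yielding the clean bound $|l_k(\tilde x)|\ge \beta^s/(1-\beta^2)^{2^{s-1}}$, whereas you argue asymptotically via $\log A_s$; both routes are equally valid.
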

\begin{proof}
It is easy to check that $(1-\beta^{s})(1-\beta^2)> 1-\beta+\beta^{s-1}$ for $s\geq 3$. Applying Lemma \ref{Y} for $\ell_s=\beta^s$ yields
$|l_k({\tilde x})|\geq \frac{\beta^{s}}{(1-\beta^2)^{2^{s-1}}}\to \infty$ as $s\to \infty.$
 Of course, $\Lambda_{2^s}(Y_{s-1}, K_{\beta})> |l_k({\tilde x})|.$
\end{proof}

Theorem \ref{beta} contradicts Theorem 6.2 from \cite{M}, where the author considered interpolation of functions at the endpoints of intervals of the Cantor procedure. For the convenience of the reader, we give the last result in our terms. Let $w_f$ be the modulus of continuity of a continuous function $f$.\\

 {\bf Theorem 6.2} (\cite{M}) Let $K$ be a geometrically symmetric Cantor set with $h_s>l_{s+1}$ for each $s$. There exists a positive function $\varphi_K(n)$ of integer argument such that, for every function $ f $ continuous on $ K $, the inequality
\begin{equation}\label{merg}
 \max_{x\in K}|f(x)-L_{2^{s+2}}(f,x;Y_{s+1})|< C\,w_f(\varphi_K(2^{s+2}))
 \end{equation}
holds, where $C$ does not depend on $s$.\\

The set $K_{\beta}$ with $\beta<1/3$ satisfies the condition of the theorem. Since the right side of \eqref{merg} is bounded by $2C$ for all continuous
functions $f$ with $||f||\leq 1$, it follows that the Lebesgue constants $\Lambda_{2^{s+2}}(Y_{s+1}, K_{\beta})$ are uniformly bounded. Indeed, for a fixed $s$,
we take a point
${\overbar x}\in K_{\beta}$ for which $\Lambda_{2^{s+2}}(Y_{s+1}, K_{\beta})=\lambda_{2^{s+2}}({\overbar x})$ and a function $f$ with $||f||=1$ such that
$f(x_k)=sign \,l_k({\overbar x})$, so that we have $\Lambda_{2^{s+2}}(Y_{s+1}, K_{\beta}) = L_{2^{s+2}}(f,\overbar x;Y_{s+1})$.

 From $||f||=1$, it follows that $w_f(\varphi_K(2^{s+2})) \leq 2$. Hence, we get $$ |f(\overbar x)-\Lambda_{2^{s+2}}(Y_{s+1}, K_{\beta})|  =|f(\overbar x)-L_{2^{s+2}}(f,\overbar x;Y_{s+1})|< C\,w_f(\varphi_K(2^{s+2})) \leq 2C.$$

This implies $$ \Lambda_{2^{s+2}}(Y_{s+1}, K_{\beta}) \leq 2C +1.$$
Since $C$ does not depend on $s$, this is contradictory to Theorem \ref{beta} that he have just proven.

 The wrong estimation in \cite{M} is on the page 65, 4-th line from below, where the author estimated above the large number
 $M_s:=\frac{2^{2s+3} \ell_{s+2}}{(2^{s+1}!)\ell_{s+1}^{2^{s+1}}}$ by a bounded value $C\,w_f(M_s).$ For $K_{\beta}$, we have $\ell_{s}=\beta^s,$ which is
 $\Delta_{s-1}$ in the author notation. By Stirling’s formula, the leading term of $\log M_s$ is $2^{s+1}\, (s+1)\,(\log1/{\beta}-\log 2),$ which tends
 to $\infty$ as $s\to \infty$ since $\beta<1/3.$\\

Theorem \ref{beta} does not imply $K_{\beta}\notin {\mathcal{BLC}}$. We believe that the choice of interpolating nodes $Y_s$ here is far from optimal.
In the case of small Cantor sets, this choice can be applied.

\section{\bf Sets $K^{\alpha}$. Uniform distribution.}

Suppose $\alpha >1$ is fixed. The set $K^{\alpha}$ is associated with the sequence $\ell_s=\ell_1^{\alpha^{s-1}}$ for $s\geq 1.$
The conditions \eqref{l} with $s=0$ and $s=1$ imply $\ell_1\leq \min\left \{\frac{1}{3}, \left(\frac{1}{3}\right)^{\frac{1}{\alpha-1}}\right\}.$

\begin{lemma}\label{sum}
For each $\alpha > 1$ there are $C_{\alpha}, n_{\alpha}\in \Bbb N$ such that $\sum_{k=0}^n \frac{1}{2^k\,h_k}\leq \frac{C_{\alpha}}{2^n\,h_n}$
for $n\geq 0$ and $\sum_{k=0}^n \frac{1}{2^k\,h_k}\leq \frac{2}{2^n\,h_n}$ for $n\geq n_{\alpha}$.

If  $\alpha \geq 2$ then $C_{\alpha}=7, n_{\alpha}=4.$ For $1<\alpha <2$ we can take $C_{\alpha}=5$ and
$n_{\alpha}=3+\left[\frac{\log(\log 12/\log 3)}{\log \alpha}\right].$
\end{lemma}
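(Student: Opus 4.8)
The plan is to reduce both assertions to a single estimate on the "tail ratios" of the sequence $b_k:=2^kh_k$. Put $r_j:=b_{j+1}/b_j=2h_{j+1}/h_j$ and $T_n:=b_n\sum_{k=0}^n b_k^{-1}$; then the two inequalities to be proved are exactly $T_n\le C_\alpha$ for all $n\ge0$ and $T_n\le2$ for $n\ge n_\alpha$. From $T_n=b_n/b_n+(b_n/b_{n-1})\sum_{k=0}^{n-1}b_{n-1}/b_k$ one gets the recursion $T_n=1+r_{n-1}T_{n-1}$ with $T_0=1$; equivalently, for a fixed constant $C\ge1$ the inequality $\sum_{k\le n}b_k^{-1}\le C\,b_n^{-1}$ propagates from $n$ to $n+1$ precisely when $r_n\le\frac{C-1}{C}$. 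So everything comes down to bounding $r_j$ from above, with the bound getting smaller as $j$ grows.

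First I would record the estimates on $r_j$. Since $K^\alpha$ is $\ell$-regular, $h_j/\ell_j$ is non-decreasing, so $h_j\ge(1-2\ell_1)\ell_j$; combining this with $h_{j+1}\le\ell_{j+1}$, with $\ell_{j+1}/\ell_j=\ell_1^{\alpha^{j-1}(\alpha-1)}$, and with the standing hypothesis $\ell_1^{\alpha-1}\le\tfrac13$ gives, for $j\ge1$,
\[
 r_j\ \le\ \frac{2}{1-2\ell_1}\,\bigl(\ell_1^{\alpha-1}\bigr)^{\alpha^{j-1}}\ \le\ \frac{2}{1-2\ell_1}\,3^{-\alpha^{j-1}}\ \le\ 6\cdot 3^{-\alpha^{j-1}},
\]
while $r_0\le\frac{2\ell_1}{1-2\ell_1}\le2$. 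When $1<\alpha<2$ one has in addition the sharper bounds $r_0\le2\ell_1\le\tfrac23$ (because then $\ell_1^{\alpha-1}\ge\ell_1$) and $r_j\le\tfrac23$ for all $j\ge2$; the latter, in the worst case $\ell_1=3^{-1/(\alpha-1)}$ and using $\alpha^{j-1}\ge\alpha$, reduces to $3^{-(\alpha-1)}+2\cdot3^{-1/(\alpha-1)}\le1$ on $(1,2)$, which holds because at any critical point $p\in(0,1)$ of $f(p)=3^{-p}+2\cdot3^{-1/p}$ one has $f(p)=3^{-p}(1+p^2)<1$, using $3^p>1+p^2$ on $(0,1)$.

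Next, for $\alpha\ge2$ one has $\alpha^{j-1}\ge2^{j-1}$, hence $r_1\le2$, $r_2\le\tfrac23$, $r_j\le6\cdot3^{-2^{j-1}}<\tfrac12$ for $j\ge3$, and $r_0\le2$; feeding this into the recursion gives $T_1\le3$, $T_2\le7$, $T_3\le\tfrac{17}{3}$, $T_4\le1+\tfrac{2}{27}\cdot\tfrac{17}{3}<2$, and $T_n<2$ for all $n\ge4$ by induction (each step uses $r_{n-1}<\tfrac12$), so $C_\alpha=7$ (the maximal value is $T_2$) and $n_\alpha=4$. For $1<\alpha<2$ the recursion with $r_0\le\tfrac23$, $r_1<2$, $r_j\le\tfrac23$ ($j\ge2$) gives $T_1\le\tfrac53$, $T_2\le\tfrac{13}{3}$, and $T_n\le1+\tfrac23T_{n-1}$ for $n\ge3$, whence by induction $T_n\le3+\tfrac43\bigl(\tfrac23\bigr)^{n-2}\le\tfrac{13}{3}<5$ for every $n$; thus $C_\alpha=5$. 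Finally, $n_\alpha=3+\bigl[\tfrac{\log(\log12/\log3)}{\log\alpha}\bigr]$ is chosen so that $\alpha^{\,n_\alpha-2}>\log12/\log3$ (since $n_\alpha-2=1+[x]>x$ with $x:=\log(\log12/\log3)/\log\alpha$), hence $r_{n-1}\le6\cdot3^{-\alpha^{n-2}}<\tfrac12$ for every $n\ge n_\alpha$; consequently, once the base case $T_{n_\alpha}\le2$ is established, the $C=2$ propagation above gives $T_n\le2$ for all $n\ge n_\alpha$.

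The technical heart — and the step I expect to be the main obstacle — is the base case $T_{n_\alpha}\le2$ in the range $1<\alpha<2$. One writes $T_{n_\alpha}\le1+r_{n_\alpha-1}T_{n_\alpha-1}$, bounds $T_{n_\alpha-1}\le3+\tfrac43\bigl(\tfrac23\bigr)^{n_\alpha-3}$ from the previous paragraph, and $r_{n_\alpha-1}\le\frac{2}{1-2\ell_1}\,3^{-\alpha^{n_\alpha-2}}$ with $1-2\ell_1\ge1-2\cdot3^{-1/(\alpha-1)}=:\delta_\alpha\ge\tfrac13$, and must verify $r_{n_\alpha-1}T_{n_\alpha-1}\le1$. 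Here the crude bound $r_{n_\alpha-1}\le\tfrac12$ is not enough; one has to exploit the extra doubly-exponential decay coming from $\alpha^{n_\alpha-2}$ overshooting $\log12/\log3$ (for the small values $n_\alpha=4,5$, i.e.\ $\alpha$ bounded away from $1$, one uses that $\alpha^2<4$ resp.\ $\alpha^3<3.4$ while still $>\log12/\log3$) and, for $n_\alpha\ge6$, from $\ell_1$ being forced small so that $\delta_\alpha$ is close to $1$. This is a finite case-check on $n_\alpha\in\{4,5,\ge6\}$; together with the one-variable inequality $3^{-p}+2\cdot3^{-1/p}\le1$ behind $r_j\le\tfrac23$ it contains all the real work, the remainder being mechanical iteration of the recursion.
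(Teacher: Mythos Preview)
Your approach is essentially the paper's: the same recursion $T_n=1+r_{n-1}T_{n-1}$ (the paper writes $A_{n+1}=1+2A_n\,h_{n+1}/h_n$), the same strategy of bounding the ratios $r_j=2h_{j+1}/h_j$, and essentially the same numerical bookkeeping for $C_\alpha$. The case $\alpha\ge2$ and the bound $C_\alpha=5$ for $1<\alpha<2$ go through as you write.

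The one place where you diverge from the paper, and where you leave a genuine gap, is the $n_\alpha$ step for $1<\alpha<2$. You set it up as ``base case $T_{n_\alpha}\le2$ plus propagation via $r_{n-1}<\tfrac12$'', and then concede that the base case needs a separate case-check correlating the decay of $3^{-\alpha^{n_\alpha-2}}$ with the size of $\delta_\alpha$. That correlation is real work (for instance, with your crude bounds $r_{n_\alpha-1}\le\tfrac12$ and $T_{n_\alpha-1}\le\tfrac{35}{9}$ one gets only $T_{n_\alpha}\le\tfrac{53}{18}>2$), and you do not actually carry it out. The paper sidesteps this entirely: instead of your estimate $h_{n-1}\ge(1-2\ell_1)\ell_{n-1}$, use the exact $h_{n-1}=\ell_{n-1}-2\ell_n$ to get the sharper
\[
r_{n-1}=\frac{2h_n}{h_{n-1}}\le\frac{2\ell_n}{\ell_{n-1}-2\ell_n}=\frac{2}{\ell_{n-1}/\ell_n-2}\le\frac{2}{3^{\alpha^{n-2}}-2},
\]
and combine it with the already-established $T_{n-1}\le C_\alpha=5$ to obtain, for \emph{every} $n\ge n_\alpha$ directly,
\[
T_n\le 1+\frac{10}{3^{\alpha^{n-2}}-2}<2\quad\Longleftrightarrow\quad 3^{\alpha^{n-2}}>12,
\]
which is precisely what the definition of $n_\alpha$ guarantees. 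No base case, no propagation, no case-check.
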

\begin{proof}
Assume the bound $\sum_{k=0}^n \frac{1}{2^k\,h_k}\leq \frac{A_n}{2^n\,h_n}$  is valid for some $n\geq 0.$
We aim to express $A_{n+1}$ in terms of $A_n.$ Since $\frac{A_n}{2^n\,h_n}+\frac{1}{2^{n+1}\,h_{n+1}}\leq \frac{A_{n+1}}{2^{n+1}\,h_{n+1}},$ we have
\begin{equation}\label{A}
A_{n+1}=1+2 \, A_n \cdot \frac{h_{n+1}}{h_n}.
\end{equation}

We first examine the case $\alpha \geq 2$. For $n=0$ and $n=1$ we use the estimate $h_{n+1}<h_n.$ It is not rough, since with $\ell_1=1/3$ we have
$h_{n+1}\sim h_n$ for large  $\alpha$, whereas $\frac{h_2}{h_1}= \frac{7}{9}$ for $\alpha=2$. The obvious value $A_0=1$ gives $A_1=3$ and $A_2=7.$

If $n\geq 2$ then $\frac{h_{n+1}}{h_n}<\frac{\ell_{n+1}}{\ell_n-2\ell_{n+1}}=\frac{1}{\ell_n^{1-\alpha}-2}\leq \frac{1}{\ell_n^{-1}-2}\leq \frac{1}{3^{\alpha^{n-1}}-2},$ since $\alpha \geq 2$ and $\ell_1\leq 1/3.$ Hence,
$A_{n+1}=1+2 \, A_n \cdot (3^{2^{n-1}}-2)^{-1}$. From this $A_3=3$ and $A_n<2$ for $n\geq 4.$\\

We now turn to the case $1<\alpha <2$. Here we take $\ell_1 = \left(\frac{1}{3}\right)^{\frac{1}{\alpha-1}}$ so
$h_n=\ell_n(1-2\ell_n^{\alpha-1})= \ell_n(1-2\cdot 3^{-\alpha^{n-1}})$ for $n\geq 1.$ Hence,
$$ \frac{h_{n+1}}{h_n}= \frac{\ell_{n+1}(1-2\cdot 3^{-\alpha^{n}})}{\ell_n(1-2\cdot 3^{-\alpha^{n-1}})}=
\frac{1-2\cdot 3^{-\alpha^{n}}}{3^{\alpha^{n-1}}-2}.$$
For a fixed $n,$ let $f(\alpha)=f_n(\alpha)=\frac{1-2\cdot 3^{-\alpha^{n}}}{3^{\alpha^{n-1}}-2}.$ A direct computation shows that
$f'(\alpha)<0$ provided $n\geq 3.$
Indeed, $f'(\alpha)<0 \Leftrightarrow 2 n \alpha (3^{\alpha^{n-1}}-2)< 3^{\alpha^{n-1}}\, (3^{\alpha^{n}}-2)(n-1),$ which is valid for $n\geq 3.$ Hence,
\begin{equation}\label{hh}
\frac{h_{n+1}}{h_n}<f(1)=\frac{1}{3}\,\,\,\,\,\,\mbox {if}\,\,\,\,\,\, n\geq 3.
\end{equation}

Let us calculate the first values of $A_n: A_0=1, A_1=1+2\frac{h_1}{h_0}$ with
$\frac{h_1}{h_0}=\frac{\ell_1(1-2\cdot \ell_1^{\alpha-1})}{1-2 \ell_1}=\frac{1}{3} \frac{\ell_1}{1-2 \ell_1}< \frac{1}{3},$ so we can take
$A_1=\frac{5}{3}.$ Similarly, $\frac{h_2}{h_1}=\frac{\ell_2(1-2\cdot \ell_2^{\alpha-1})}{\ell_1(1-2\cdot \ell_1^{\alpha-1})}=1-2\cdot 3^{-\alpha},$
which is smaller than $\frac{7}{9}$ for $1<\alpha <2$. Hence, $A_2=\frac{97}{27}.$ Let us show that $\frac{h_3}{h_2}<\frac{1}{2}.$
This will give $A_3=5.$ Here, $\frac{h_3}{h_2}=f_2(\alpha)=3^{-\alpha^2}\, \frac{3^{\alpha^2}-2}{3^{\alpha}-2}<\frac{1}{2}$ if
$4\cdot 3^{\alpha}< 3^{\alpha^2+\alpha}+4.$ It is a simple matter to check that it is valid for given $\alpha.$

In its turn, $\frac{h_4}{h_3}<\frac{1}{3},$ since $5\cdot 3^{\alpha^3}< 3^{\alpha^3+\alpha^2}+6.$ The last inequality is valid, because the function
$F(\alpha)=3^{\alpha^3+\alpha^2}-5\cdot 3^{\alpha^3}+6$ is increasing and positive on the interval $(1,2).$

By \eqref{hh}, for $n\geq 3$ we can take $A_{n+1}=1+\frac{2}{3} \, A_n$. This yields $A_4=\frac{13}{3},\, A_5=\frac{35}{9},$ etc. We see that
$A_n<4$ for $n\geq 5,$ so $C_{\alpha}$ can be taken as 5.

We now turn to \eqref{A}, which implies $A_n<1+2\cdot 5\cdot \frac{\ell_n}{\ell_{n-1}-2\ell_n}=1+10(3^{\alpha^{n-2}}-2)^{-1}.$ It
follows that $A_n<2$ if $n\geq 3+\left[\frac{\log(\log 12/\log 3)}{\log \alpha}\right].$
\end{proof}

\begin{lemma}\label{llh}
Let $\alpha > 1$ and $k\geq 2.$ Then $\frac{\ell_k}{h_{k-1}}\leq (3^{\alpha^{k-2}}-2)^{-1}, \frac{\ell_k}{h_k}\leq 1+2\cdot (3^{\alpha^{k-1}}-2)^{-1}.$
\end{lemma}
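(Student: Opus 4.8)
The two inequalities are, in essence, exactly the elementary estimates on ratios of $\ell$'s and $h$'s that already appeared inside the proof of Lemma \ref{sum}; the task here is to isolate and record them cleanly for general $\alpha>1$. Recall that for $K^\alpha$ we have $\ell_s=\ell_1^{\alpha^{s-1}}$ and $h_s=\ell_s-2\ell_{s+1}=\ell_s(1-2\ell_s^{\alpha-1})$, and that the normalization $\ell_1\le 3^{-1/(\alpha-1)}$ (equivalently $\ell_1^{\alpha-1}\le 1/3$) is in force. The plan is: first reduce both quotients to expressions in the single quantity $\ell_{k-1}^{\alpha-1}$ (resp.\ $\ell_k^{\alpha-1}$); then bound that quantity from above by $3^{-\alpha^{k-2}}$ (resp.\ $3^{-\alpha^{k-1}}$); and finally substitute.

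For the first inequality, write $\frac{\ell_k}{h_{k-1}}=\frac{\ell_k}{\ell_{k-1}-2\ell_k}=\frac{1}{\ell_{k-1}/\ell_k-2}=\frac{1}{\ell_{k-1}^{1-\alpha}-2}=\frac{\ell_{k-1}^{\alpha-1}}{1-2\ell_{k-1}^{\alpha-1}}$, using $\ell_k=\ell_{k-1}^{\alpha}$. Since $k\ge 2$ we have $\ell_{k-1}=\ell_1^{\alpha^{k-2}}$, so $\ell_{k-1}^{\alpha-1}=\ell_1^{(\alpha-1)\alpha^{k-2}}\le \big(3^{-1}\big)^{\alpha^{k-2}}=3^{-\alpha^{k-2}}$ by the normalization and $\alpha^{k-2}\ge 1$. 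The map $t\mapsto \frac{1}{t^{-1}-2}$ is increasing in $t$ on $(0,1/2)$, hence $\frac{\ell_k}{h_{k-1}}=\frac{1}{\ell_{k-1}^{1-\alpha}-2}\le \frac{1}{3^{\alpha^{k-2}}-2}=(3^{\alpha^{k-2}}-2)^{-1}$, which is the claim. (One should note $3^{\alpha^{k-2}}-2>0$, so the bound is meaningful.)

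For the second inequality, write $\frac{\ell_k}{h_k}=\frac{\ell_k}{\ell_k-2\ell_{k+1}}=\frac{1}{1-2\ell_k^{\alpha-1}}=1+\frac{2\ell_k^{\alpha-1}}{1-2\ell_k^{\alpha-1}}$. Exactly as above, $k\ge 2$ gives $\ell_k=\ell_1^{\alpha^{k-1}}$, hence $\ell_k^{\alpha-1}\le 3^{-\alpha^{k-1}}$, and using that $t\mapsto \frac{2t}{1-2t}$ is increasing on $(0,1/2)$ we get $\frac{\ell_k}{h_k}=1+\frac{2\ell_k^{\alpha-1}}{1-2\ell_k^{\alpha-1}}\le 1+\frac{2\cdot 3^{-\alpha^{k-1}}}{1-2\cdot 3^{-\alpha^{k-1}}}=1+\frac{2}{3^{\alpha^{k-1}}-2}=1+2\cdot(3^{\alpha^{k-1}}-2)^{-1}$, as desired. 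The only point requiring any care is the monotonicity bookkeeping and checking the denominators stay positive; since $\ell_k^{\alpha-1}\le \ell_1^{\alpha-1}\le 1/3<1/2$ for all relevant $k$, everything is well-defined, and there is no real obstacle here — the lemma is a packaging of the computations already used in Lemma \ref{sum}.
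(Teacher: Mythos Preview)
Your proof is correct and follows the same approach as the paper's: both reduce to the identity $\frac{\ell_k}{h_{k-1}}=\big(\ell_{k-1}/\ell_k-2\big)^{-1}$ together with the bound $\ell_{k-1}/\ell_k=\ell_1^{-(\alpha-1)\alpha^{k-2}}\ge 3^{\alpha^{k-2}}$, and for the second part use $\frac{\ell_k}{h_k}=1+\frac{2\ell_{k+1}}{h_k}$ and apply the first inequality with $k+1$. You have simply spelled out the monotonicity and positivity checks that the paper leaves implicit.
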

\begin{proof} Indeed, $\frac{\ell_{k-1}}{\ell_k}={\ell_1}^{-(\alpha-1) \alpha^{k-2}}\geq 3^{\alpha^{k-2}}$ and $\frac{\ell_k}{h_{k-1}}=\frac{1}{\frac{\ell_{k-1}}{\ell_k}-2},\,\, \frac{\ell_k}{h_k}=1+ \frac{2\ell_{k+1}}{h_k}.$
\end{proof}

\begin{lemma}\label{llq}
Suppose that the points $(x_k)_{k=1}^{2^s}$ with $x_1<x_2<\cdots <x_{2^s}$ are uniformly distributed on $K^{\alpha}$.
Then, for each $q$ with $0\leq q \leq s,$ we have $\ell_q\leq x_j+x_{2^{s-q}+1-j}+\ell_s$ for $1\leq j \leq 2^{s-q}.$
\end{lemma}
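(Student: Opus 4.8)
The plan is to combine the rigidity forced by uniform distribution with the reflection symmetry of the Cantor construction; notice that the specific form $\ell_s=\ell_1^{\alpha^{s-1}}$ will not actually be used, so the argument works for any geometrically symmetric Cantor set.

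First I would pin down where the points sit. Since $Z=(x_k)_{k=1}^{2^s}\in\mathcal U$ has exactly $2^s$ points and $E_s$ has exactly $2^s$ basic intervals $I_{i,s}$, the bound $|m_{i,s}-m_{j,s}|\le 1$ together with $\sum_i m_{i,s}=2^s$ forces $m_{i,s}=1$ for every $i$. Hence every level-$q$ interval contains exactly $2^{s-q}$ of the $x_k$, and because the $x_k$ are strictly increasing while the basic intervals of level $s$ are pairwise disjoint (separated by positive gaps $h_{s-1},h_{s-2},\dots$) and ordered from left to right, the point lying in $I_{i,s}$ is exactly $x_i$. In particular $I_{1,q}=[0,\ell_q]$ contains $x_1<\cdots<x_{2^{s-q}}$ with $x_j\in I_{j,s}$ for $1\le j\le 2^{s-q}$.

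Next I would establish the symmetry statement: the family of level-$s$ basic intervals contained in $I_{1,q}$ is invariant under the reflection $\rho(t)=\ell_q-t$ about the midpoint of $I_{1,q}$. This goes by induction on the level $k$ from $k=q$ up to $k=s$: the base case is trivial, and in the Cantor step each $I_{i,k}$ is replaced by two adjacent subintervals of equal length $\ell_{k+1}$ separated by the central gap $h_k=\ell_k-2\ell_{k+1}$, hence placed symmetrically about the midpoint of $I_{i,k}$; so if $\rho$ permutes the level-$k$ intervals inside $I_{1,q}$ it also permutes their children, i.e.\ the level-$(k+1)$ intervals inside $I_{1,q}$. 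Since $\rho$ reverses the left-to-right order, it sends the $j$-th level-$s$ subinterval of $I_{1,q}$ (counted from the left) to the $(2^{s-q}+1-j)$-th one; as these are globally $I_{1,s},\dots,I_{2^{s-q},s}$, we get $\rho(I_{j,s})=I_{2^{s-q}+1-j,s}$. Writing $I_{j,s}=[a,a+\ell_s]$, this reads $I_{2^{s-q}+1-j,s}=[\ell_q-a-\ell_s,\ \ell_q-a]$.

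Finally I would just add inequalities: from $x_j\in I_{j,s}$ we have $x_j\ge a$, and from $x_{2^{s-q}+1-j}\in I_{2^{s-q}+1-j,s}$ we have $x_{2^{s-q}+1-j}\ge \ell_q-a-\ell_s$; summing gives $x_j+x_{2^{s-q}+1-j}\ge \ell_q-\ell_s$, which is the assertion. The extreme cases are harmless: for $q=s$ it degenerates to $x_1\ge 0$, and for $q=0$ to $x_j+x_{2^s+1-j}\ge 1-\ell_s$, both covered by the same reasoning. The only point requiring genuine care is the index bookkeeping in the symmetry step — checking that the left-to-right position of a level-$s$ subinterval inside $I_{1,q}$ coincides with its global index $j$, so that $\rho$ really interchanges $I_{j,s}$ with $I_{2^{s-q}+1-j,s}$; everything else is immediate.
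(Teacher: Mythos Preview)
Your proof is correct and follows essentially the same approach as the paper: both arguments rest on the observation that the level-$s$ subintervals of $I_{1,q}$ are symmetric about $\ell_q/2$, so that $I_{j,s}$ and $I_{2^{s-q}+1-j,s}$ are reflections of one another, and then bound $x_j$ and $x_{2^{s-q}+1-j}$ from below by the left endpoints of their respective intervals. The paper's proof is a terse three sentences; you have simply spelled out the details (the pigeonhole argument forcing $m_{i,s}=1$, the inductive justification of the reflection symmetry, and the index bookkeeping) that the paper takes for granted.
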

\begin{proof} The interval $I_{1,q}$ contains $ 2^{s-q}$ subintervals of the $s-$th level with $x_j\in I_{j,s}$.
The intervals $I_{j,s}$ and $I_{2^{s-q}+1-j,s}$ are symmetric with respect to
$\ell_q/2.$ Therefore, the distance from $x_{2^{s-q}+1-j}$ to $\ell_q$ does not exceed $x_j+\ell_s.$
\end{proof}

We proceed to show that, for small sets, the choice of interpolating nodes $Y_{s-1}$ can ensure boundedness of a subsequence of Lebesgue constants,
compare to \cite{KOR}.

\begin{theorem}\label{bdd1}
Suppose that for each $s$ the set $Z_s=(x_{k,s})_{k=1}^{2^s}$ is uniformly distributed on $K^{\alpha}$. If $\alpha \geq 2$ then
$\Lambda_{2^s}(Z_s, K^{\alpha})\to 1$ as $s\to \infty$.
If $\alpha < 2$ then $\Lambda_{2^s}(Z_s, K^{\alpha})\to \infty$ as $s\to \infty$.
\end{theorem}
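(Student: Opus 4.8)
The plan is to analyze the Lebesgue function $\lambda_{2^s}(x) = \sum_{k=1}^{2^s} |l_k(x)|$ at an arbitrary $x \in K^\alpha$ by exploiting the chain of basic intervals \eqref{x} containing $x$, together with the multiplicative structure of the fundamental polynomials. For a fixed $x$, write $|l_k(x)| = \prod_{j\ne k} \frac{d_?(x)}{d_?(x_k)}$ and organize the nodes according to the nested intervals $I_{j_R,0}\supset I_{j_{R-1},1}\supset\cdots$; the node $x_k$ together with the $\nu_n$ nodes in each annular piece $J_n$ contributes a controlled factor. Since the points are uniformly distributed, by the remark following the definition of $\mathcal U$ each $I_{i,s}$ contains exactly one node $x_{i,s}$, so $d_1(x), d_2(x)$ are comparable to $\ell_s$ or $h_{s-1}$ depending on whether the sibling of $x$'s interval is on the correct side, while the gap from $x$ to nodes outside $I_{j_{R-n},n}$ is at least $h_n$. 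This should yield a bound $|l_k(x)| \le C \prod_n (1 + (\text{small}))$ and, crucially, a bound on $\lambda_{2^s}(x)$ that reduces to estimating $\sum_n \frac{1}{2^n h_n}$-type sums, which is exactly what Lemma \ref{sum} controls.

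For the case $\alpha \ge 2$: first I would show $\lambda_{2^s}(x) \le 1 + \varepsilon_s$ with $\varepsilon_s \to 0$. The key point is that the product $\prod_{j\ne k}\left|\frac{x-x_j}{x_k-x_j}\right|$ for $k$ the index with $x_k \in I_{j_0,s}$ the interval nearest $x$ is close to $1$ because all ratios $\frac{|x-x_j|}{|x_k-x_j|}$ for $x_j$ outside $I_{j_0,s}$ differ from $1$ by at most $\frac{|x - x_k|}{|x_k - x_j|} \le \frac{\ell_s}{h_n}$ where $n$ is the level at which $x_j$ separates from $x_k$; since at level $n$ there are $2^{s-n}$-ish such nodes, the total logarithmic contribution is $\lesssim \sum_{n} 2^{s-n}\cdot 2^{n-s}\cdot\frac{\ell_s}{\ell_s}$... more carefully $\sum_n \frac{\ell_s \cdot (\#\text{nodes at level }n)}{h_n}$, and using $\ell_s/\ell_n = \ell_1^{\alpha^{n-1}(\alpha^{s-n}-1)}$ together with Lemma \ref{llh} this is summable and $\to 0$. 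Meanwhile the other fundamental polynomials $|l_k(x)|$ for $x_k$ far from $x$ are exponentially small: $|l_k(x)| \le \frac{|x - x_k|^{-1}\prod|x-x_j|}{|x_k - x_j|^{-1}\prod \cdots}$, and the ratio picks up a factor $\frac{\ell_s}{h_{n_0}}$ at the level $n_0$ separating $x_k$ from $x$, which decays doubly-exponentially for $\alpha\ge 2$. Summing over $k$ gives $\lambda_{2^s}(x)\le 1 + o(1)$. The lower bound $\Lambda_{2^s}\ge 1$ is trivial since $\sum_k l_k \equiv 1$ forces $\lambda_{2^s}(x)\ge 1$.

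For the case $1 < \alpha < 2$: here I would produce an explicit point $x$ and index $k$ where $|l_k(x)|\to\infty$, mirroring the structure of Lemma \ref{Y} but adapted to the uniformly distributed nodes $Z_s$ rather than $Y_{s-1}$. Natural candidates: take $x$ to be (near) an endpoint of a small interval, say the "free" side, and $k$ the node in the adjacent interval on the far side of a large gap at an early level; then $|l_k(x)| = \prod_{j\ne k}\left|1 + \frac{x - x_k}{x_k - x_j}\right|$ over the $\sim 2^{s-1}$ nodes on the opposite side of the gap $h_0$ (or $h_1$), each factor being $\ge 1 + c$ for a constant $c$ depending on $\alpha$, giving growth $(1+c)^{2^{s-1}}\to\infty$. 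The point is that for $\alpha < 2$ the largest gap $h_0 \ge \frac13$ stays comparable to $\ell_s^{\text{(small power)}}$ in the sense that $h_0$ is not negligible relative to the span, so crossing it costs a uniformly-bounded-below multiplicative factor per node; this is exactly the phenomenon that fails when $\alpha \ge 2$ (intervals shrink too fast, the doubly-exponential decay wins). I expect the main obstacle to be the $\alpha\ge2$ upper bound: getting the constant down to exactly $1$ in the limit (rather than just a bound) requires careful bookkeeping of which side each sibling interval lies on relative to $x$, and a clean splitting of the sum over $k$ into the "near" node (contributing $1 + o(1)$) and the "far" nodes (contributing $o(1)$ in total), with all error sums routed through Lemma \ref{sum} and Lemma \ref{llh}.
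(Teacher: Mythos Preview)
Your overall strategy matches the paper's: for $\alpha<2$ exhibit a single $|l_k(\tilde x)|\to\infty$ in the spirit of Lemma~\ref{Y}, and for $\alpha\ge 2$ split $\lambda_{2^s}(x)$ into the near-node term $|l_1(x)|\to 1$ plus a sum over far nodes controlled via Lemmas~\ref{sum} and~\ref{llh}. However, two genuine gaps remain.

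\textbf{The $\alpha<2$ lower bound.} You only account for the $2^{s-1}$ factors coming from nodes $x_j$ on the \emph{far} side of the level-$1$ gap, each contributing $\ge 1+c$. But the remaining $2^{s-1}-1$ factors from nodes on the \emph{same} side as $x$ and $x_k$ (the product the paper calls $\pi_1$) are not automatically $\ge 1$; in fact this product is of order $\ell_s=\ell_1^{\alpha^{s-1}}$, i.e.\ doubly-exponentially small. The whole dichotomy lives here: $(1+c)^{2^{s-1}}$ beats $\ell_1^{\alpha^{s-1}}$ precisely when $\alpha<2$. The paper handles $\pi_1$ by a careful pairing argument (Lemma~\ref{llq}) showing $d_j(x_k)\le d_{j+1}(\tilde x)$, which telescopes to give $\pi_1\gtrsim \ell_s$. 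Without this step your lower bound collapses.

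\textbf{The $\alpha=2$ upper bound.} Your sketch for the far-node sum gives, after the bookkeeping, a bound of the shape $e^{C\,2^s}\cdot \ell_s/\ell_{s-1}\asymp e^{C\,2^s}\cdot 3^{-\alpha^{s-2}}$. For $\alpha>2$ this tends to $0$ since $\alpha^{s-2}\gg 2^s$, and the paper's argument is exactly this. But for $\alpha=2$ the two exponents are comparable and this crude bound is inconclusive; the paper devotes more than half of the proof to a separate, much finer analysis at $\alpha=2$ (introducing a threshold $m_0\asymp\log s$, splitting the sum over $m$ into four ranges, and for $m\in\{1,2\}$ grouping nodes by level-$3$ subintervals to get sharper numerical constants so that $\log(\pi_1\pi_2)\le 2^{s-3}\log 16$ rather than $2^{s+1}$). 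Your plan does not anticipate this borderline case, and the ``decays doubly-exponentially for $\alpha\ge 2$'' claim is false at $\alpha=2$ without this extra work.
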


\begin{corollary}
The sequence $(\Lambda_{2^s}(Y_{s-1}, K^{\alpha}))_{s=1}^{\infty}$ is bounded if and only if $\alpha \geq 2$.
\end{corollary}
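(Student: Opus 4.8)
The plan is to reduce the statement to a single uniform estimate on the off-diagonal part of the Lebesgue function. Fix $s$ and the uniformly distributed set $Z_s=(x_k)_{k=1}^{2^s}$; then $m_{j,s}=1$ for every $j$, so each level-$s$ basic interval carries exactly one node. For $x\in K^\alpha$ write $I_{j_0,s}$ for the level-$s$ interval containing $x$, $x_{k_0}$ for the unique node in it, and use the chain \eqref{x} with $R=s$ and the sets $J_1,\dots,J_s$, for which uniformity forces $\nu_n=2^{s-n}$. Since every node except $x_{k_0}$ lies outside $I_{j_0,s}$ while $x,x_{k_0}\in I_{j_0,s}$, all the factors $(x-x_k)/(x_{k_0}-x_k)$ have the same sign, so $l_{k_0}(x)>0$ and $\lambda_{2^s}(x)=l_{k_0}(x)+\sum_{k\neq k_0}|l_k(x)|$. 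First I would show that $l_{k_0}(x)\to1$ uniformly in $x$, for every $\alpha>1$: each node of $J_n$ is at distance $\ge h_{n-1}$ from $I_{j_0,s}$, hence
\[
|\log l_{k_0}(x)|\le 2\sum_{n=1}^{s}2^{s-n}\frac{\ell_s}{h_{n-1}}=2^{s}\ell_s\sum_{k=0}^{s-1}\frac1{2^k h_k}\le\frac{2C_\alpha\,\ell_s}{h_{s-1}}\le\frac{2C_\alpha}{3^{\alpha^{s-2}}-2}\longrightarrow0
\]
by Lemmas \ref{sum} and \ref{llh}. Consequently it suffices to prove that
\[
S_s:=\sup_{x\in K^\alpha}\sum_{k\neq k_0}|l_k(x)|\ \longrightarrow\ 0\quad(\alpha\ge2),\qquad S_s\ \longrightarrow\ \infty\quad(\alpha<2).
\]

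Both halves rest on the identity, valid for any node $x_k$ (say $x_k\in J_n$),
\[
\frac{|l_k(x)|}{l_{k_0}(x)}=\frac{|x-x_{k_0}|}{|x-x_k|}\cdot\frac{|\omega'(x_{k_0})|}{|\omega'(x_k)|},
\]
in which the first factor is at most $\ell_s/h_{n-1}$. The second factor I would estimate by splitting $|\omega'(x_{k_0})|=\prod_{k'\neq k_0}|x_{k_0}-x_{k'}|$ and $|\omega'(x_k)|=\prod_{k'\neq k}|x_k-x_{k'}|$ according to the sibling decompositions of $x_{k_0}$ and of $x_k$. Since $x_{k_0}$ and $x_k$ lie in the same level-$(n-1)$ interval, these decompositions coincide at every level above $n-1$; the matching factors cancel up to errors again controlled by Lemma \ref{sum}, and what survives is at worst of the order $c^{2^{s-1}}$ with a constant $c>1$ built from the first-scale ratios (the largest being $(1-\ell_1)/(1-2\ell_1)$). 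The decisive point is that $|x-x_k|\ge h_{n-1}$ is comparable to $\ell_{n-1}$, so the first factor has doubly exponential smallness $\ell_s/\ell_{n-1}=\ell_1^{\,\alpha^{s-1}-\alpha^{n-2}}$; balancing the two factors leads to an inequality of the form
\[
\alpha^{s-1}\log\tfrac1{\ell_1}\ \ \gtrless\ \ 2^{s-1}\log\tfrac{1-\ell_1}{1-2\ell_1}\ +\ (\text{lower order}),
\]
which holds when $\alpha\ge2$ and fails when $\alpha<2$: indeed $\ell_1\le 1/3$ gives $\ell_1^2-3\ell_1+1>0$, hence $\log\frac1{\ell_1}>\log\frac{1-\ell_1}{1-2\ell_1}$, while $\alpha^{s-1}\ge2^{s-1}$ for $\alpha\ge2$ and $\alpha^{s-1}=o(2^{s-1})$ for $\alpha<2$.

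For $\alpha\ge2$ this is to be converted into a bound $\sum_{x_k\in J_n}|l_k(x)|\le\eta_n$ with $\eta_n$ independent of $x$ and $\sum_{n\ge1}\eta_n$ arbitrarily small for large $s$ (the summability over $n$ coming from Lemma \ref{sum}, $\ell$-regularity, and $\ell_{s+1}=\ell_s^\alpha$), so that $S_s\to0$ and, with Part~1, $\Lambda_{2^s}(Z_s,K^\alpha)\to1$. For $\alpha<2$ I would instead, given any uniform $Z_s$, apply Lemma \ref{llq} to pin down a level-$q$ interval $I^{*}$ ($q$ depending only on $\alpha$) inside which the constraints on a uniform distribution force a node $x_k$ close to one edge of a child $A$ of $I^{*}$ while leaving a point $\xi_s\in K^\alpha$ available on the far side of $A$; the $2^{s-q-1}$ nodes of $Z_s$ in the sibling child of $A$ then contribute a factor $\ge(1+c_\alpha)^{2^{s-q-1}}$ with $c_\alpha>0$ independent of $s$, while the remaining factors stay bounded below by a positive constant (precisely the behaviour of $\pi_1,\pi_2$ in Lemma \ref{Y}), whence $S_s\ge|l_k(\xi_s)|\to\infty$ and $\Lambda_{2^s}(Z_s,K^\alpha)\to\infty$. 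The corollary is immediate once one notes that $Y_{s-1}$ is a uniform distribution. The hard part is the estimate of $|\omega'(x_{k_0})|/|\omega'(x_k)|$ and, with it, the precise tracking of the constants at the first few scales, since the threshold $\alpha=2$ (equivalently, the admissibility of $\ell_1=1/3$) is genuinely borderline.
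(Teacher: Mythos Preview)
Your overall architecture matches the paper's: isolate the nearest-node term $l_{k_0}(x)$ and show it tends to $1$, then control $\sum_{k\neq k_0}|l_k(x)|$ by grouping nodes according to the sibling intervals $J_n$ and balancing the small factor $\ell_s/h_{n-1}$ against an exponentially large product of distance ratios. Your ratio identity $|l_k(x)|/l_{k_0}(x)=\frac{|x-x_{k_0}|}{|x-x_k|}\cdot\frac{|\omega'(x_{k_0})|}{|\omega'(x_k)|}$ is a legitimate reformulation of the paper's direct decomposition \eqref{lk}; the two are algebraically equivalent.

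However, there are two genuine gaps. First, the case $\alpha=2$: you correctly identify it as borderline and then stop. In the paper this case does not follow from the crude bound $\log(\pi_1(m)\pi_2(m))\le 2^{s+1}$ (which is what your asserted $c^{2^{s-1}}$ with $c=(1-\ell_1)/(1-2\ell_1)$ would amount to, if it held); one needs a separate, level-by-level analysis splitting $m\ge m_0$, $3\le m<m_0$, $m=2$, $m=1$, with explicit numerical constants (e.g.\ $\log\pi_2(2)\le 2^{s-3}\log 8.56$) to squeeze the total below $2^{s-1}\log 3$. Your heuristic inequality $\alpha^{s-1}\log(1/\ell_1)>2^{s-1}\log\frac{1-\ell_1}{1-2\ell_1}$ captures the right comparison, but you never establish that the derivative ratio actually obeys the bound $c^{2^{s-1}}$; the cancellation you invoke (``matching factors cancel up to errors controlled by Lemma~\ref{sum}'') only handles the factors \emph{outside} $I_{n-1}$, and the $2^{s-n+1}$ factors \emph{inside} are precisely where the work lies.

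Second, your treatment of $\alpha<2$ is muddled. Lemma~\ref{llq} does not ``pin down a level-$q$ interval''; it is a symmetry estimate on node positions inside $I_{1,q}$, used in the paper only to show $d_j(x_k)\le d_{j+1}(\tilde x)$ in the $\pi_1$ bound. There is no $q$ depending on $\alpha$: the paper simply takes $I^*=[0,1]$, $A=I_{1,1}$, the node $x_k$ with $k=2^{s-1}-1$ (forced by uniformity into $I_{2^{s-1}-1,s}$, near the right edge of $I_{1,1}$), and $\tilde x\in I_{1,s}$ on the $(s{+}1)$-level subinterval opposite to $x_1$. The large factor comes from the $2^{s-1}$ nodes in $I_{2,1}$, exactly as in \eqref{pi2}. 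For the Corollary itself (nodes $Y_{s-1}$), you do not even need this: Lemma~\ref{Y} already gives $|l_k(\tilde x)|\ge \ell_s\bigl(\frac{1-\ell_s}{1-\ell_1+\ell_{s-1}}\bigr)^{2^{s-1}}\to\infty$ when $\alpha<2$, since $\alpha^{s-1}=o(2^{s-1})$.
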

\begin{proof}

Consider first $\alpha < 2.$ We begin by extending Lemma \ref{Y} to any uniform distribution. Fix $Z_s\in \mathcal{U}.$
For brevity, we drop the subscript $s$ in $x_{j,s}$, which are numbered in order of increase, so $x_j\in I_{j,s}.$
Fix ${\tilde x}\in I_{1,s}$ in a such way that ${\tilde x}$ and $x_1$ lie on different subintervals of the $s+1$-st level.
As in Lemma \ref{Y}, we estimate $ | l_k ({\tilde x})|= \pi_1\cdot \pi_2$ for $k=2^{s-1}-1.$

Here,  $d_1(\tilde x)\geq h_{s}$ and $d_j(\tilde x)=x_j-\tilde x$ for $j\geq 2.$ In its turn, $d_1(x_k)=x_{k+1}-x_k\leq \ell_{s-1},
d_j(x_k)=x_k-x_{k+1-j}$ for $j\geq 2$ with $d_{k-1}(x_k)\leq \ell_1-2\ell_{s-1}+2\ell_s, d_k(x_k)<\ell_1.$

Let us show that, as in Lemma \ref{Y}, $d_j(x_k)\leq d_{j+1}(\tilde x)$ for $2\leq j\leq k-2.$
Indeed, Lemma \ref{llq} with $q=1$ and $j+1$ instead of $j$ yields $\ell_1\leq x_{j+1}+x_{k+1-j}+\ell_s$. Then
$d_j(x_k)=\ell_1-x_{k+1-j}-(\ell_1- x_k)\leq x_{j+1}+\ell_s-(\ell_1- x_k)=d_{j+1}(\tilde x)+ \ell_s+\tilde x-(\ell_1- x_k)\leq d_{j+1}(\tilde x),$
by \eqref{l}, since  $\ell_s+\tilde x\leq 2 \ell_s$ and $\ell_{s-1}-\ell_s\leq \ell_1-x_k.$

Hence, the term $\pi_1$ can be handled in much the same way as in \eqref{pi1}:
$$ \pi_1\geq \frac{d_1(\tilde x)\,d_2(\tilde x)d_k(\tilde x)}{d_1(x_k)\,d_{k-1}(x_k)d_k(x_k)}\geq \frac{ h_{s} h_{s-1}(\ell_1-2\ell_s)}{\ell_{s-1}
(\ell_1-2\ell_{s-1}+2\ell_s)\ell_1}> \frac{ h_{s} h_{s-1}}{\ell_{s-1}\ell_1}\geq  \frac{1}{3} \ell_s= \frac{1}{3} \ell_1^{\alpha^{s-1}}.$$

Likewise, the product $\pi_2$ consists of $2^{s-1}$ terms of the kind $1+\frac{x_k-\tilde x}{x_i-x_k}$, for which $x_k-\tilde x>\ell_1-\ell_{s-1}-\ell_{s}$
and $x_i-x_k\leq 1-x_k\leq 1-\ell_1+\ell_{s-1}$ as $x_i\in I_{2,1}.$ Therefore, the estimate \eqref{pi2} is also true for any uniform distribution.
From this, $ | l_k ({\tilde x})|\geq \frac{1}{3}\ell_1^{\alpha^{s-1}} \left( \frac{1-\ell_{s}}{1-\ell_1+ \ell_{s-1}}\right)^{2^{s-1}},$ which
is as big as we want for large $s$.\\

Now suppose $\alpha > 2.$ Fix $\varepsilon >0.$ We want to get a uniform upper bound for $\lambda_{2^s}(x).$ Fix $x$ in $K^{\alpha}$ and, as in \eqref{x},
the chain $ x\in I_{j,s} \subset  I_{j_1,s-1} \subset \cdots \subset I_{j_s,0}$
with the corresponding adjacent intervals $J_m.$ Since $Z\in \mathcal{U}$, we have $m_{j,s}=1$ and $\nu_m=2^{s-m}$ for $1\leq m \leq s$. We enumerate
$(x_k)_{k=1}^{2^s}$ in order of increasing distance to $x$. Then $x_1\in  I_{j,s}, x_2 \in J_s, \cdots, (x_k)_{k=2^{s-1}}^{2^s}\subset J_1$ and
$$ \lambda_{2^s}(x)=|l_1(x)|+\sum_{m=1}^{s}\sum_{x_k\in J_m}|l_k(x)|$$
with $2^{s-m}$ points $x_k$ in $J_m.$

For the first term we have
$$ |l_1(x)|=\prod \limits_{i=2}^{2^s}\left|\frac{ x - x_i}{x_1 - x_i}\right|=\left|\frac{ x - x_2}{x_1 - x_2}\right| \cdot  \prod \limits_{m=1}^{s-1}\prod \limits_{x_i\in J_m}\left|\frac{ x - x_i}{x_1 - x_i}\right|.$$
Here, $\left|\frac{ x - x_2}{x_1 - x_2}\right|\leq \frac{\ell_{s-1}}{h_{s-1}}\leq 1+2\cdot (3^{\alpha^{s-2}}-2)^{-1},$ by Lemma \ref{llh}.
For $x_i\in J_m$ we have $|x_1-x_i|\geq h_{m-1}$ and  $\left|\frac{ x - x_i}{x_1 - x_i}\right|=\left|1+\frac{ x - x_1}{x_1 - x_i}\right|\leq
1+\frac{\ell_{s}}{h_{m-1}}.$ Hence, $\pi_2:= \prod \limits_{m=1}^{s-1}\prod \limits_{x_i\in J_m}\left|\frac{ x - x_i}{x_1 - x_i}\right|\leq
\prod \limits_{m=1}^{s-1}\left(1+\frac{\ell_{s}}{h_{m-1}}\right)^{2^{s-m}}$ with $\log \pi_2\leq \sum_{m=1}^{s-1}2^{s-m}\frac{\ell_{s}}{h_{m-1}}=
2^{s-1}\,\ell_{s}\,\sum_{m=1}^{s-1}\frac{1}{2^{m-1}\,h_{m-1}}.$ By Lemma \ref{sum}, the sum does not exceed $(2^{s-3}\,h_{s-2})^{-1}$ and
$\log \pi_2\leq 4\cdot \frac{\ell_{s}}{h_{s-2}}< 12\cdot \frac{\ell_{s}}{\ell_{s-2}}=12\cdot \ell_{s-2}^{\alpha^2-1}< 12\cdot 3^{-3\cdot 2^{s-3}}$
as $\alpha > 2.$ Hence,
$$\log  |l_1(x)|\leq 2\cdot (3^{\alpha^{s-2}}-2)^{-1}+12\cdot 3^{-3\cdot 2^{s-3}},$$
which tends to zero as $s\to \infty.$ Given $\varepsilon,$ we choose $s_1$ such that $ |l_1(x)|\leq 1+ \varepsilon$ for $s\geq s_1.$\\

We proceed to estimate $|l_k(x)|$ for $k\geq 2.$ Fix $m\leq s$ and $x_k\in J_m.$ Let $I:=I_{j_{s-m+1},m-1}.$ Then
\begin{equation}\label{lk}
 |l_k(x)|= \prod_{x_i\in I\setminus \{x_k\}} \left|\frac{ x - x_i}{x_k - x_i}\right| \,\,\cdot  \prod \limits_{x_i\notin I}\left|\frac{ x - x_i}{x_k - x_i}\right|.
\end{equation}
The first product has $1+\nu_s+\nu_{s-1}+\cdots+\nu_m-1=2^{s-m+1}-1$ terms, so it is  $\prod_{i=1}^{2^{s-m+1}-1}\frac{d_i(x)}{d_i(x_k)}.$
We see that $d_1(x)\leq \ell_{s}, d_2(x)\leq \ell_{s-1}, d_3(x)$ and $d_4(x)\leq \ell_{s-2}, \cdots$ with $2^{s-m}-1$ points $x_i$ in $J_m\setminus \{x_k\}$
for which $d_i(x)\leq \ell_{m-1}.$
Similarly, $d_1(x_k)\geq h_{s-1}, d_2(x_k), d_3(x_k)\geq h_{s-2}, \cdots.$ Hence,
$$ \prod_{x_i\in I\setminus \{x_k\}} \left|\frac{ x - x_i}{x_k - x_i}\right|= \frac{\ell_{s} \ell_{s-1}\ell_{s-2}^2\cdots\ell_m^{2^{s-m-1}}
\ell_{m-1}^{2^{s-m}-1}}{h_{s-1}h_{s-2}^2\cdots h_{m-1}^{2^{s-m}}}= \frac{\ell_{s}}{\ell_{m-1}}\cdot \pi_1(m)$$
with
$\pi_1(m):=\frac{\ell_{s-1}}{h_{s-1}} \left(\frac{\ell_{s-2}}{h_{s-2}}\right)^2 \cdots \left(\frac{\ell_{m-1}}{h_{m-1}}\right)^{2^{s-m}}\leq
\left(\frac{\ell_{m-1}}{h_{m-1}}\right)^{2^{s-m+1}}$ since the sets  $K^{\alpha}$ is $\ell$-regular.

The second product in \eqref{lk} (denoted $\pi_2(m)$) can be handled in much the same way as  $\pi_2$ above. If $x_i\in J_n$ with $1\leq n\leq m-1$ then
$\left|\frac{ x - x_i}{x_k - x_i}\right|\leq 1+\frac{\ell_{m-1}}{h_{n-1}}$ as $x,x_k\in I$ and $|x_k - x_i|\geq h_{n-1}.$ There are $2^{s-n}$ nodes in
$J_n$, so $\pi_2(m)\leq \prod_{n=1}^{m-1} \left(1+ \frac{\ell_{m-1}}{h_{n-1}}\right)^{2^{s-n}}$. Of course, $\pi_2(1)=1.$

Thus, for  $x_k$ from $J_m$ we have
\begin{equation}\label{lkk}
 |l_k(x)|\leq \frac{\ell_{s}}{\ell_{m-1}}\cdot \pi_1(m) \pi_2(m).
\end{equation}

To estimate $\pi_1(m) \pi_2(m)$ from above, consider various cases of $m$. Let $m\geq 3.$ Then
\begin{equation}\label{pii1}
\log \pi_1(m) \leq 2^{s-m+1}\log(1+\frac{2\ell_m}{h_{m-1}})< 2^{s-m+2}\frac{\ell_m}{h_{m-1}}\leq \frac{2^{s-m+2}}{3^{\alpha^{m-2}}-2},
\end{equation}
by Lemma \ref{llh}. Hence, $\log \pi_1(m) \leq \frac{2^s}{14}$ for such $m, \alpha.$ In its turn,
$\log \pi_2(m) \leq \sum_{n=1}^{m-1} 2^{s-n} \frac{\ell_{m-1}}{h_{n-1}}= 2^{s-1} \ell_{m-1} \sum_{n=1}^{m-1}\frac{1}{2^{n-1}\,h_{n-1}}.$
By Lemma \ref{sum}, $\log \pi_2(m) \leq 14 \cdot 2^{s-m} \frac{\ell_{m-1}}{h_{m-2}}.$ Lemma \ref{llh} now yields
\begin{equation}\label{pii2}
\log \pi_2(m) \leq \frac{14\cdot 2^{s-m}}{ 3^{\alpha^{m-3}}-2}
\end{equation}
 with the maximal value $7\cdot 2^{s-2}.$  Thus, $\log (\pi_1(m)\pi_2(m))\leq 2^{s+1}$ for $m\geq 3.$

For $m=2$ we have $\pi_1(2)\leq \left(\frac{\ell_1}{h_1}\right)^{2^{s-1}}$ with $\log \pi_1(2) \leq 2^{s-1} \log 3$ and
$\pi_2(2)\leq \left(1+\frac{\ell_1}{h_0}\right)^{2^{s-1}}$ with $\log \pi_2(2) \leq 2^{s-1}.$ Here, $\log (\pi_1(2)\pi_2(2))\leq 2^{s+1}$.
The same bound is valid for  $\log \pi_1(1)$ since $\pi_1(1)\leq \left(\frac{\ell_0}{h_0}\right)^{2^s}\leq 3^{2^s}.$ Substituting this into \eqref{lkk}
yields
$$\sum_{m=1}^{s}\sum_{x_k\in J_m}|l_k(x)|\leq \sum_{m=1}^{s}  2^{s-m} \frac{\ell_{s}}{\ell_{m-1}} e^{2^{s+1}}<e^{2^{s+1}}2^{s-1}\ell_{s}\sum_{m=1}^{s} \frac{1}{2^{m-1}\,h_{m-1}}.$$ By Lemma \ref{sum}, $ RHS \leq 6\cdot e^{2^{s+1}}\ell_{s}/\ell_{s-1}\leq 6\cdot e^{2^{s+1}} 3^{-\alpha^{s-2}},$
which is less than $\varepsilon$ for $s\geq s_2$ as
\begin{equation}\label{last}
 2^{s+1}< \alpha^{s-2}\log 3.
\end{equation}
Therefore, $ \lambda_{2^s}(x)<1+2\,\varepsilon$ for $s\geq \max\{s_1,s_2\}$ provided $\alpha>2.$\\

For $\alpha=2$, \eqref{last} is not valid and we need more detailed analysis of $\pi_j(m)$. As above, fix $\varepsilon >0$
and $x \in K^2.$ In the same manner we can see that $|l_1(x)|\leq 1+ \varepsilon$ for $s\geq s_1.$

Let $m_0:=4+\left[\frac{\log s}{\log 2}\right].$ We split the sum $\sum_{m=1}^{s}\sum_{x_k\in J_m}|l_k(x)|$ into several parts.

Suppose $m\geq m_0.$ Then, by \eqref{pi1}, $\log \pi_1(m)<1.$ Similarly,  by \eqref{pi2}, $\log \pi_2(m)<1$ as $\alpha^{m-3}>s.$ Hence, by \eqref{lkk}
and Lemma \ref{sum},
$$\sum_{m=m_0}^{s}\sum_{x_k\in J_m}|l_k(x)|\leq \sum_{m=m_0}^{s} 2^{s-m} e^2 \frac{\ell_s}{\ell_{m-1}}< 6\cdot e^2\,\frac{\ell_s}{\ell_{s-1}}< \varepsilon
\,\,\,\,\,\,\mbox{for}\,\,\, s\geq s_2.$$

If $3\leq m<m_0$ then $\pi_1(m)\leq \left(\frac{\ell_{m-1}}{h_{m-1}}\right)^{2^{s-m+1}}\leq \left(\frac{\ell_2}{h_2}\right)^{2^{s-2}},$ so
$\log \pi_1(m)\leq 2^{s-2} \log\frac{9}{7}.$

If $m\geq 4$ then, by \eqref{pi2}, $\log \pi_2(m)\leq 2^{s-3}.$ Also,  $\log \pi_2(3)\leq \sum_{n=1}^2 2^{s-n} \frac{\ell_2}{h_{n-1}}=\frac{5}{12}\,2^s.$
Therefore, $\log \pi_1(m)\log \pi_2(m)\leq e^{2^{s-1}}$ for such $m$ and similar to above
$$\sum_{m=3}^{m_0-1}\sum_{x_k\in J_m}|l_k(x)|\leq \sum_{m=3}^{m_0-1} 2^{s-m} e^{2^{s-1}}\frac{\ell_s}{\ell_{m-1}}< 12\cdot  2^{s-m_0} e^{2^{s-1}}\frac{\ell_s}{\ell_{m_0-2}}.$$
This does not exceed $\varepsilon$ for $s\geq s_3,$ because $\log\frac{12}{\varepsilon}+s \log 2 + 2^{s-1}<(2^{s-1}-2^{m_0-3})\log 3.$\\

The cases $m=2$ and $m=1$ are more complicated. To simplify the writing, we fix $\ell_1=\frac{1}{3},$ so $h_0=\frac{1}{3}, \ell_2=h_1=\frac{1}{9}, \ell_3=\frac{1}{81}, h_2=\frac{7}{81}.$

For fixed $x_k\in J_2$ we consider the decomposition \eqref{lk} with $I=I_{j_{s-1},1}.$ There is no loss of generality in assuming $I=I_{1,1},$
so $x$ and $x_k$ lie on the left half of $K^2$. As above, we denote the second product of \eqref{lk}, that is
$\prod \limits_{x_i\in I_{2,1}}\left|\frac{ x - x_i}{x_k - x_i}\right|$ by $\pi_2(2).$ Estimating $\pi_2(2)$, we can assume that $x\in I_{1,2}$ and
$x_k\in I_{2,2},$ since otherwise $\pi_2(2)<1.$ The maximum value of $\frac{ x_i - x}{x_i - x_k}$ is reached at the minimum $x$ and maximum $x_k.$
Hence we can take $x=0$ and $x_k=\ell_1.$ The function $\frac{t}{t - \ell_1}$ decreases on $[1-\ell_1, 1].$
We divide $x_i\in I_{2,2}$ into four groups corresponding  subintervals of the third level: $x_i\in I_{5,3}\Rightarrow \frac{ x_i}{x_i - \ell_1}\leq \frac{1-\ell_1}{1- 2\ell_1}=2,$ if $x_i\in I_{6,3}$ then $\frac{ x_i}{x_i - \ell_1}\leq \frac{1-\ell_1+\ell_2-\ell_3}{1-2\ell_1+\ell_2-\ell_3}=\frac{62}{35},$
$x_i\in I_{7,3} \Rightarrow \frac{ x_i}{x_i - \ell_1}\leq \frac{1-\ell_2}{1-\ell_1-\ell_2}=\frac{8}{5}, x_i\in I_{8,3} \Rightarrow \frac{ x_i}{x_i - \ell_1}\leq \frac{1-\ell_3}{1-\ell_1-\ell_3}=\frac{80}{53}.$ Each group contains $2^{s-3}$ nodes. Hence, $\log \pi_2(2)\leq 2^{s-3} \log 8.56.$

The first product in \eqref{lk} can be written as
$\left|\frac{1}{x- x_k}\right|\cdot \frac{\prod_{i=1}^{2^{s-1}} d_i(x) }{\prod_{i=2}^{2^{s-1}} d_i(x)},$ which is smaller than
$\frac{\ell_s}{h_1} \pi_3 \pi_4,$ where, in $\pi_3:=\prod_{i=2}^{2^{s-2}} \frac{d_i(x)}{d_i(x_k)}$, the distances $d_i(y)$ correspond to the nodes in
$I_2\ni y,$ whereas  $d_i(y)$ in $\pi_4:=\prod_{i=2^{s-2}+1}^{2^{s-1}} \frac{d_i(x)}{d_i(x_k)}$ do for the adjacent to $I_2$ interval.
Just like for $\pi_1(m)$ above, we get $\pi_3\leq \frac{\ell_{s-1}}{h_{s-1}} \cdots \left(\frac{\ell_3}{h_3}\right)^{2^{s-4}}  \left(\frac{\ell_2}{h_2}\right)^{2^{s-3}}\leq \left(\frac{\ell_3\,\ell_2}{h_3\,h_2}\right)^{2^{s-3}}$. Hence, $\log \pi_3\leq 2^{s-3}\log 1.32.$

To estimate $\pi_4,$ by symmetry, we assume, as above, that $x=0, x_k\in I_{2,2}.$ Then $d_i(x)\leq d_i(0)=\ell_2+h_1+\delta_i,$ where
$(\delta_i)_{i=1}^{2^{s-2}}$ are distances from points of $Z\cap I_{2,2}$ to the left endpoint of $I_{2,2}$. Likewise,
$d_i(x_k)\geq h_1+\delta'_i,$ where $(\delta'_i)_{i=1}^{2^{s-2}}$ are distances from points of $Z\cap I_{1,2}$ to $\ell_2.$
In both cases we arrange distances increasingly.
Since each interval of the $s-$th level contains exactly one point of $Z$, it follows that $|\delta_i-\delta'_i|\leq \ell_s$ for every $i.$
Hence,
$$\pi_4\leq \prod_{i=1}^{2^{s-2}} \frac{\ell_2+h_1+\delta'_i+\ell_s}{h_1+\delta'_i}\leq \prod_{i=1}^{2^{s-2}} \frac{\ell_2+h_1+\ell_s}{h_1}=
(2+9\ell_s)^{2^{s-2}}<2\cdot 2^{2^{s-2}}.$$
Combining these yields $|l_k(x)|\leq  \frac{\ell_s}{h_1}  \pi_2 \pi_3 \pi_4$ and $\sum_{x_k\in J_2}|l_k(x)|\leq 9\cdot 2^{s-2} \ell_s \pi_2 \pi_3 \pi_4.$
This does not exceed $\varepsilon$ provided $\log\frac{9}{\varepsilon}+(s-2) \log 2 + \log(\pi_2 \pi_3 \pi_4)<2^{s-1}\log 3,$ which is valid for
$s\geq s_4$ since $\log(\pi_2 \pi_3 \pi_4)\leq 2^{s-3}\log 16.$

It remains to consider $m=1$ with $\pi_2(1)=1.$ Here, $|l_k(x)|\leq \left|\frac{\ell_s}{h_0}\right|\cdot \prod_{i=2}^{2^{s}} \frac{d_i(x)}{d_i(x_k)}.$
Now we use the decomposition of
$\prod_{i=2}^{2^{s}}$ into three parts $\prod_{i=2}^{2^{s-2}} \prod_{i=2^{s-2}+1}^{2^{s-1}} \prod_{i=2^{s-1}+1}^{2^{s}},$ where the first product
$\pi_5$ contains distances to the closest nodes (on intervals of the 2-nd level), so it coincides with $\pi_3(2)$; the middle part $\pi_6$
 contains distances to the nodes on adjacent intervals of the 2-nd level, so it equals $\pi_4(2)$, and $\pi_7$ contains $d_i>h_0.$ Hence,
 $\log(\pi_5\,\pi_6)\leq 2^{s-3}\log (4\cdot 1.32) +\log 2.$

The term $\pi_7$ can be handled in much the same way, as $\pi_4$ above. If  $2^{s-1}<i\leq 2^{s-1}+2^{s-2}$ then
 $ \frac{d_i(x)}{d_i(x_k)}\leq  \frac{\ell_1+h_0+\delta_i}{h_0+\delta'_i}\leq  \frac{\ell_1+h_0+\ell_s}{h_0}=2+3\,\ell_s.$
 If  $2^{s-1}+2^{s-2}<i\leq 2^s$ then
 $ \frac{d_i(x)}{d_i(x_k)}\leq  \frac{1-\ell_2+\delta_i}{1-\ell_1-\ell_2+\delta'_i}\leq  \frac{1-\ell_2+\ell_s }{1-\ell_1-\ell_2}=
 \frac{8+9\ell_s}{5}.$ This gives $\pi_7<2\,(16/5)^{2^{s-2}}.$

As above, $\sum_{x_k\in J_1}|l_k(x)|\leq 3\cdot 2^{s-1} \ell_s \pi_5 \pi_6 \pi_7 < \varepsilon$ for $s\geq s_5,$ because
 $\log(\pi_5 \pi_6 \pi_7)\leq 2^{s-3}\log 16.$

Thus, $ \lambda_{2^s}(x)<1+5\,\varepsilon$ for $s\geq \max_{1\leq k\leq 5} s_k,$ which completes the proof.
\end{proof}

\vspace{3 mm}

If we decrease the number of nodes by 1, then the corresponding subsequence of Lebesgue constants is not bounded for each $\alpha$.

\begin{theorem}\label{unif}
Given $\alpha >1$, suppose that for each $s$ the set $Z_s=(x_{k,s})_{k=1}^{2^s-1}$ is uniformly distributed on $K^{\alpha}$.
Then $\Lambda_{2^s-1}(Z_s, K^{\alpha})\to \infty$ as $s\to \infty.$
\end{theorem}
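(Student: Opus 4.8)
The plan is to bound $\Lambda_{2^s-1}(Z_s,K^\alpha)$ below by a single value $|l_k(\tilde x)|$, exactly in the spirit of Lemma \ref{Y} and of the $\alpha<2$ part of the proof of Theorem \ref{bdd1}, but with $\tilde x$ placed in the \emph{hole} of the node set rather than next to a node.

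\emph{Step 1 (combinatorics of the hole).} Since $\#Z_s=2^s-1$ and $Z_s\in\mathcal U$, exactly one basic interval $I_0:=I_{p,s}$ of level $s$ is empty. Running the counting argument of Lemma \ref{RR} down the chain, each ancestor $I_0\subset I_1\subset\cdots\subset I_s=[0,1]$, with $I_t$ of level $s-t$, contains $2^t-1$ points, hence the sibling $S_t$ of $I_{t-1}$ inside $I_t$ is \emph{full} (carries the maximal number $2^{t-1}$ of points). In particular $I_1$ carries a single point $z^\ast\in S_1$, and, reflecting $K^\alpha$ if needed, we may assume $I_{1,1}\supset I_0$ while $I_{2,1}$ is full with $2^{s-1}$ points.

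\emph{Step 2 (the estimate).} Take $\tilde x$ to be the endpoint of $I_0$ farthest from $z^\ast$; then the nodes nearest $\tilde x$ are $z^\ast$ at distance $\sim\ell_{s-1}$, then the two nodes of $S_2$ at distance $\sim\ell_{s-2}$, and so on, so $d_1(\tilde x)\ge h_{s-1}$ and $d_2(\tilde x)\ge h_{s-2}$. Pick $x_k$ in the full half $I_{1,1}$ (the node closest to $I_{2,1}$, lying in a full sub-interval, chosen so that it is roughly symmetric to $\tilde x$ inside $I_{1,1}$) and write $|l_k(\tilde x)|=\pi_1\pi_2$ with $\pi_1$ running over the nodes of $I_{1,1}$ and $\pi_2$ over the $2^{s-1}$ nodes of $I_{2,1}$. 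Using a symmetrization for $2^s-1$ uniformly distributed points in place of Lemma \ref{llq} (which must absorb the shift in the indexing caused by $I_0$), one obtains $d_i(x_k)\le d_{i+1}(\tilde x)$ for the interior indices, so, exactly as in \eqref{pi1}, $\pi_1$ keeps only its boundary factors and
$$\pi_1\gtrsim \frac{d_1(\tilde x)\,d_2(\tilde x)}{d_1(x_k)}\gtrsim \frac{h_{s-1}h_{s-2}}{\ell_{s-1}}\sim \ell_{s-2},$$
the crucial improvement over Theorem \ref{bdd1}, where the analogous factor was only $\sim\ell_s$ because there $\tilde x$ sat next to a node. As in \eqref{pi2}, since $x_k-\tilde x\sim\ell_1$ one gets $\pi_2\ge\bigl(1+(x_k-\tilde x)/(1-x_k)\bigr)^{2^{s-1}}\ge(1+c)^{2^{s-1}}$ with a fixed $c=c(\ell_1)>0$. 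Combining the two estimates yields $|l_k(\tilde x)|\to\infty$.

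\emph{Where the work is.} The whole difficulty sits in Step 2. First, one must genuinely produce $x_k$ and prove the telescoping $d_i(x_k)\le d_{i+1}(\tilde x)$ when $\tilde x$ is the hole-point: the location of $I_0$ inside $I_{1,1}$ is not at our disposal, so the node $x_k$ and the sub-interval across which one splits have to be chosen according to where the ``deficiency sub-chain'' of $I_{1,1}$ runs, and the analogue of Lemma \ref{llq} has to be stated for $2^s-1$ nodes. Second, one must verify that the growth of $\pi_2$ really dominates the loss in $\pi_1$ for \emph{every} $\alpha>1$: for moderate $\alpha$ the crude bounds above already do this, but when $\alpha$ is close to or larger than $2$ the two quantities are of comparable size and one is forced to split across a carefully chosen level $q$ and estimate the localized products $\pi_1(m),\pi_2(m)$ in the style of the $\alpha=2$ part of the proof of Theorem \ref{bdd1}. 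I expect this balancing, together with the bookkeeping of the hole's position, to be the only technically demanding point.
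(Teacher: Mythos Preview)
Your approach is to force a \emph{single} $|l_k(\tilde x)|$ to blow up, in the style of Lemma~\ref{Y}. The paper does something quite different: it picks $q=[\log s]$, looks at the $2^q-1$ nodes $x_k$ lying in the level-$(s-q)$ interval $I_{j_q,s-q}$ containing the hole, and shows that for each of them $|l_k(\tilde x)|=\pi_1\pi_2$ with both factors bounded \emph{below} by a fixed $\varepsilon_i>0$ (in fact $\pi_1,\pi_2\to 1$). No individual term is shown to diverge; the Lebesgue function diverges because the number of terms exceeding $\varepsilon_0$ tends to infinity. This sidesteps exactly the two difficulties you flag.

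Your sketch has a genuine gap for $\alpha\ge 2$. With your split at level $1$ you get $\pi_1\gtrsim \ell_{s-2}$ and $\pi_2\ge (1+c)^{2^{s-1}}$, hence
\[
|l_k(\tilde x)|\ \gtrsim\ \ell_{s-2}\,(1+c)^{2^{s-1}}
= \exp\!\bigl(-\alpha^{s-3}\,|\log\ell_1|+2^{s-1}\log(1+c)\bigr),
\]
and for $\alpha>2$ the first term in the exponent dominates, so this lower bound tends to $0$, not to $\infty$. You recognise this and propose to ``split across a carefully chosen level $q$'' and imitate the $\alpha=2$ portion of the proof of Theorem~\ref{bdd1}; but that portion produces \emph{upper} bounds on $|l_k|$, and splitting at a higher level $q$ kills the gain in $\pi_2$ (each factor is at most $\ell_{q-1}/h_{q-1}\to 1$) without curing the loss in $\pi_1$. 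So the single-polynomial strategy, as outlined, does not go through for large $\alpha$.

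There is also the hole-position issue you mention: the bound $\pi_2\ge(1+c)^{2^{s-1}}$ with fixed $c>0$ needs $x_k-\tilde x$ comparable to $\ell_1$ with $\tilde x<x_k$, but if $I_0$ is the rightmost level-$s$ subinterval of $I_{1,1}$ there is no such $x_k$ in $I_{1,1}$, and reflecting does not help. The paper's approach is insensitive to where the hole sits: since $\tilde x$ and every relevant $x_k$ lie in the same small interval $I_{j_q,s-q}$, the outer product $\pi_2$ is automatically close to $1$ (the bound $\log\pi_2\ge -2^{q+3}\ell_{s-q}/h_{s-q-1}\to 0$ uses only Lemma~\ref{sum}), and the inner product $\pi_1=|\omega'(\tilde x)|/|\omega'(x_k)|$ is handled by matching the distance profiles of $\tilde x$ and $x_k$ level by level, which works regardless of where inside $I_{j_q,s-q}$ the hole lies.
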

\begin{proof}
Fix $Z_s\in \mathcal{U}$ and drop the subscript $s$ in $x_{k,s}.$ Each interval of the $s-$th level, except one, let $I_{j,s}$, contains one point from $Z_s$.
Fix $\tilde x\in I_{j,s}$ and, as in \eqref{x}, the chain of basic intervals containing this point:
$\tilde x\in I_{j,s}\subset I_{j_1,s-1}\subset\cdots \subset I_{j_s,0}.$ The idea of the proof is to choose a suitable value
$q=q(s)$ with $q(s)\to \infty$ and obtain the estimate $|l_k(\tilde x)|>\varepsilon_0$ uniform in $s$ and $x_k\in I_{j_q,s-q}$.
There are $2^q-1$ interpolating nodes in $I_{j_q,s-q}$. Therefore,
$\Lambda_{2^s-1}(Z_s, K^{\alpha})>\sum_{x_k \in I_{j_q,s-q}}|l_k(\tilde x)|\to \infty$ as $s\to \infty,$ the desired conclusion.

Let $q=[\log s]$. Fix $x_k\in I_{j_q,s-q}.$ We use the decomposition
$|l_k(\tilde x)|=\pi_1 \cdot \pi_2,$ where the product $\pi_1=\prod |\frac{\tilde x - x_i}{x_k - x_i}|$ is taken for
$x_i \in Z \cap I_{j_q,s-q}$ and, correspondingly, $\pi_2$ does for the remaining nodes.

Let us evaluate $\pi_1$. Multiplying both parts of the fraction by $|x_k-\tilde x|$ yields $\pi_1=\frac{|\omega'(\tilde x)|}{|\omega'(x_k)|},$ where
$\omega(x)=\prod_{x_i\in I_{j_q,s-q}}(x-x_i).$ It is easily seen that $|\omega'(x_k)|\leq \ell_{s-1}\,\ell_{s-2}^2\cdots \ell_{s-q}^{2^{q-1}}$
and $|\omega'(\tilde x)|\geq h_{s-1}\, h_{s-2}^2\cdots h_{s-q}^{2^{q-1}}$ with
$\pi_1\geq \frac{h_{s-1}}{\ell_{s-1}} (\frac{h_{s-2}}{\ell_{s-2}})^2\cdots  (\frac{h_{s-q}}{\ell_{s-q}})^{2^{q-1}}$. The value $\frac{h_n}{\ell_n}$
increases, so $\pi_1\geq (\frac{h_{s-q}}{\ell_{s-q}})^{2^q}$ and $\log \pi_1 > 2^q \log(1-2  \ell_{s-q}^{\alpha-1})> -2^{q+2}\ell_{s-q}^{\alpha-1}. $
Here, $2^q\leq s^{\log 2}$ and $\alpha^{s-q-1}\geq \alpha^{s/2}$ for $s\geq 4.$
Therefore,  $\log \pi_1 >-4 s^{\log 2} \ell_{1}^{(\alpha-1)\alpha^{s/2}},$ which tends to zero as $s$ increases. Thus, there exists an
$\varepsilon_1>0,$ depending only on $\alpha$, such that $\pi_1\geq \varepsilon_1.$

To deal with $\pi_2$, we use the argument from Theorem \ref{bdd1}: $\pi_2= \prod \limits_{n=1}^{s-q} \prod_{x_i\in J_n}|\frac{\tilde x-x_i}{x_k - x_i}|,$
where $J_n$ and $I_{j_{s-n},n}$ are adjacent.  The interval $J_n$ contains $2^{s-n}$
interpolating nodes $x_i$. For each of them we have
$|\frac{\tilde x - x_i}{x_k - x_i}|\geq 1- \frac{|\tilde x - x_k|}{|x_k - x_i|}\geq 1-\frac{\ell_{s-q}}{h_{n-1}}$ and
$\pi_2\geq \prod \limits_{n=1}^{s-q} (1- \frac{\ell_{s-q}}{h_{n-1}})^{2^{s-n}}.$ Hence,
$\log\pi_2\geq \sum_{n=0}^{s-q-1} 2^{s-n-1}\log(1- \frac{\ell_{s-q}}{h_n})\geq -2^s\,\ell_{s-q} \sum_{n=0}^{s-q-1}\frac{1}{2^n h_n}.$
By Lemma \ref{sum}, $\log\pi_2\geq -2^{q+3}\,\frac{\ell_{s-q}}{h_{s-q-1}}$ for large enough $s$.
 Thus, $\log \pi_2 >-3\cdot 2^{q+3} \ell_{s-q-1}^{\alpha-1},$ which tends to zero as $s$ increases. We conclude that there exists an
$\varepsilon_2=\varepsilon_2(\alpha)>0$ such that $\pi_2\geq \varepsilon_2,$ which is the desired conclusion.
\end{proof}

\section{\bf Sets $K^{\alpha}$. Arbitrary distribution.}

In this section, we consider a generalization of the previous theorem, at least on small $ K^{\alpha}$, to any set of $2^s-1$ interpolating nodes.

\begin{theorem}\label{arb}
If $\alpha > 2$ then $K^{\alpha} \notin {\mathcal{BLC}}$.
\end{theorem}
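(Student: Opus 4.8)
The plan is to show that for \emph{any} choice of $2^s-1$ nodes $Z_s$ from $K^\alpha$, the Lebesgue constant $\Lambda_{2^s-1}(Z_s,K^\alpha)$ cannot stay bounded. Theorem \ref{unif} already handles uniformly distributed nodes, so the new content is to deal with arbitrary distributions. The natural strategy is a dichotomy: either $Z_s$ is ``close to uniform'' on the relevant scales, in which case we can mimic the argument of Theorem \ref{unif} to find a single bad fundamental polynomial (or a sum of $2^q-1$ of them) with values bounded below by an absolute constant; or $Z_s$ is ``far from uniform'', meaning some basic interval $I_{j,q}$ contains many more points than a balanced split would allow, i.e.\ $m_{j,q}$ is large relative to its depth. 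In the second case we exploit the clustering directly.

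First I would set up the key geometric quantity: for a basic interval $I_{j,q}$ with $m_{j,q}\ge 2$, the value $R_{j,q}$ from \eqref{R}. The combinatorics after \eqref{R} gives $m_{j,q}\le 2^{R_{j,q}-q+1}$, and Lemma \ref{RR} pins down $R_{1,0}$ in the uniform case. The plan is: since $\#Z_s=2^s-1<2^s$, at least one interval of the $s$-th level is empty, so there is ``room'' somewhere. I would look for a basic interval $I_{j,q}$ that contains two points $z,z'$ of $Z_s$ sitting in \emph{adjacent} subintervals of one level deeper but such that a neighboring subinterval at an appropriate level is empty — this empty interval supplies a test point $\tilde x$ at controlled distance. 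Then, following the $\pi_1\cdot\pi_2$ decomposition used throughout Section 4, for a node $x_k$ in the cluster the ``inner'' product $\pi_1$ (ratios of distances from $\tilde x$ versus from $x_k$ to the other cluster nodes) is bounded below using $\ell$-regularity of $K^\alpha$ and Lemma \ref{llh}, exactly as in the proof of Theorem \ref{unif}; the ``outer'' product $\pi_2$ (nodes outside the cluster) is bounded below by $\prod_n(1-\ell_{\text{small}}/h_{n-1})^{2^{s-n}}$-type estimates controlled by Lemma \ref{sum}, which is where $\alpha>2$ enters to force convergence to a positive limit. Summing $|l_k(\tilde x)|$ over the nodes in the cluster gives a lower bound that grows with the number of nodes there, hence with $s$ provided we can always locate a cluster whose size tends to infinity.

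The main obstacle is precisely ensuring that \emph{no matter how} the $2^s-1$ points are distributed, one can locate such a cluster of unbounded size together with a nearby empty interval of controlled relative scale, uniformly in $s$. For uniform distributions Lemma \ref{RR} gives $R_{1,0}=s-1$ and the cluster can be taken deep (size tending to infinity as in Theorem \ref{unif}); for highly non-uniform distributions some interval is overfull, which by the inequality $m_{j,q}\le 2^{R_{j,q}-q+1}$ again produces a deep interval containing exactly two points, hence a cluster. The delicate point is the intermediate regime and making the constants $\varepsilon_1,\varepsilon_2$ in the $\pi_1,\pi_2$ bounds independent of \emph{which} interval and which depth is used — this requires the monotonicity of $h_n/\ell_n$ (the $\ell$-regularity of $K^\alpha$) and the summability estimates of Lemma \ref{sum} applied at an arbitrary starting level, together with the fact that for $\alpha>2$ the tails $\ell_{n}/\ell_{n-1}=\ell_1^{(\alpha-1)\alpha^{n-2}}$ decay doubly exponentially so all correction factors are $1+o(1)$.

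I would organize the write-up as: (i) fix $Z_s$, use $\#Z_s<2^s$ to find an empty $s$-th level interval and, tracing up the chain \eqref{x}, the smallest level $q=q(s)$ at which the containing interval is non-empty with at least two points — call it $I:=I_{j_q,s-q}$; (ii) show $q(s)\to\infty$ cannot fail, because if $q(s)$ stayed bounded along a subsequence then a bounded-depth interval would contain $\approx 2^s$ points forcing, by a counting/pigeonhole argument inside $I$, a sub-cluster of unbounded depth anyway; (iii) with the cluster $I$ and the empty neighbor fixed, run the $\pi_1\cdot\pi_2$ estimates verbatim from Theorem \ref{unif}, obtaining $|l_k(\tilde x)|\ge\varepsilon_0=\varepsilon_0(\alpha)>0$ for each of the $\ge 2^{q(s)}-1$ nodes $x_k$ in $I$; (iv) conclude $\Lambda_{2^s-1}(Z_s,K^\alpha)\ge(2^{q(s)}-1)\,\varepsilon_0\to\infty$. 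Since $Z_s$ was arbitrary, this gives $K^\alpha\notin\mathcal{BLC}$ for $\alpha>2$.
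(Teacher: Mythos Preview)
Your plan has a genuine gap in step (iii). The $\pi_1$ estimate in Theorem~\ref{unif} does \emph{not} carry over verbatim to arbitrary distributions. There, the bounds
\[
|\omega'(x_k)|\le \ell_{s-1}\ell_{s-2}^2\cdots\ell_{s-q}^{2^{q-1}},\qquad
|\omega'(\tilde x)|\ge h_{s-1}h_{s-2}^2\cdots h_{s-q}^{2^{q-1}}
\]
rely on the fact that each level-$s$ subinterval of $I_{j_q,s-q}$ contains exactly one node. For arbitrary $Z_s$, nodes inside your cluster $I$ can be placed anywhere: in particular, many of them can sit in the level-$s$ interval \emph{adjacent} to the empty one containing $\tilde x$, giving factors $|\tilde x-x_i|/|x_k-x_i|$ of order $\ell_{s-1}/\ell_{s-q}$, which is doubly-exponentially small when $q$ is large. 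So a uniform bound $\pi_1\ge\varepsilon_1(\alpha)$ valid for \emph{every} $x_k$ in the cluster is not available, and the summation in step (iv) collapses. Step (ii) is also confused: with your definition (smallest $q$ such that $I_{j_q,s-q}$ has $\ge 2$ points), bounded $q$ means a \emph{deep} interval with as few as two points, not a shallow interval with $\approx 2^s$ points.

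The paper takes a quite different route. Instead of summing many $|l_k|$, it controls a \emph{single} fundamental polynomial. Using Lemma~\ref{RR}, the deepest level $R=R_{1,0}$ at which some interval $I_{j,R}$ carries two nodes satisfies $R\ge s$ when $Z_s\notin\mathcal U$. With $x_1,x_2\in I_{j,R}$, one shows by decreasing induction on the chain~\eqref{x} that if any $J_n$ contained an empty level-$R$ subinterval, then for $\tilde x$ in that subinterval the single value $|l_1(\tilde x)|$ would already exceed $M$: the dominant factor is $h_{n-1}/\ell_R$ (coming from $|\tilde x-x_2|\ge h_{n-1}$ against $|x_1-x_2|\le\ell_R$), and the remaining products are controlled by Lemmas~\ref{sum} and~\ref{llh} using $\alpha>2$. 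Propagating this down to $n=1$ forces every one of the $2^R\ge 2^s$ intervals of level $R$ to be non-empty, contradicting $\#Z_s=2^s-1$. The point is that the tight pair $x_1,x_2$ supplies a single huge ratio; no uniform control over a large cluster is needed.
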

\begin{proof}
Suppose, contrary to our claim, that there is $M$ such that for each $N$ one can find a set $Z$ of $N$ points from $K^{\alpha}$ satisfying
$\Lambda_N(Z, K^{\alpha})\leq M.$ We restrict ourselves to the values $N=2^s-1$.
Thus, for each $s$ there is $Z_s=(x_{j,s})_{j=1}^{2^s-1}\subset K^{\alpha}$ such that
\begin{equation}\label{lbdd}
|l_k(x)|\leq M \,\,\,\,\,\mbox {for}\,\,\, 1\leq k \leq 2^s-1,\,\,\,x\in K^{\alpha}.
\end{equation}
For a given $M$, let $s_M$ be such that $\alpha^{s-2}>2^{s+2}$ and $2^s>\log M+1$ for $s\geq s_M.$
In what follows, we consider $s\geq s_M$ and the corresponding set $Z_s.$ As above, we drop the subscript $s$ in $x_{j,s}.$
We can certainly assume that $Z_s \notin {\mathcal{U}},$ since otherwise Theorem \ref{unif} can be applied.
By Lemma \ref{RR}, $R:=R_{1,0}\geq s.$ Fix $I_{j,R}$ with $m_{j,R}=2$ and enumerate the nodes in a such way that
$x_1, x_2 \in I_{j,R}$ and $d_i(x_1)=|x_1-x_i|.$  As in \eqref{x}, let $I_{j,R} \subset I_{j_1,R-1} \subset \cdots \subset I_{j_{R},0}=I_{1,0}$.
We denote $I_{j_{R-n},n}$ briefly by $I_n$.
Then the interval $J_k$ is adjacent to $I_k$  for $1\leq k \leq R$ with $\nu_k:=\#(J_k \cap Z_s).$
Since $J_k$ contains $2^{R-k}$ intervals of the $R-$th level, each of them has at most two points of $Z_s$, we have  $\nu_k\leq 2^{R-k+1}$.

Our goal is to prove step by step that each $J_k$ does not contain empty subintervals of the $R-$th level. If $m_{n,R}=0$ for some
$ I_{n,R} \subset J_k$, then we fix any $\tilde x\in I_{n,R}\cap K^{\alpha}$ and estimate from below
$|l_1(\tilde x)|$  to get a contradiction with \eqref{lbdd}.

Suppose first that $\nu_R=0.$ Then for $\tilde x\in J_R\cap K^{\alpha}$ we have $|l_1(\tilde x)|=\left|\frac{\tilde x - x_2}{x_1 - x_2}\right| \cdot \pi_2,$ where $\pi_2:= \prod_{i=3}^N\left|\frac{\tilde x - x_i}{x_1 - x_i}\right|= \prod \limits_{k=1}^{R-1} \prod_{x_i\in J_k}\left|\frac{\tilde x - x_i}{x_1 - x_i}\right|.$ Analysis similar to that in the proof of Theorem \ref{unif} shows that
 $\pi_2\geq \prod \limits_{k=1}^{R-1} (1- \frac{\ell_{R-1}}{h_{k-1}})^{2^{R-k+1}}.$ As above, we have
$\log\pi_2\geq -\sum_{k=1}^{R-1} 2^{R-k+2}\frac{\ell_{R-1}}{h_{k-1}}>-1,$ by Lemma \ref{sum}. Hence,
$ |l_1(\tilde x)|> \frac{1}{e}\, \frac{h_{R-1}}{\ell_R}\geq \frac{1}{3e}\, \frac{\ell_{R-1}}{\ell_R}=
\frac{1}{3e}\, \left(\frac{1}{\ell_1}\right)^{\alpha^{R-1}-\alpha^{R-2}}\geq \frac{1}{3e}\cdot 3^{\alpha^{R-2}}$ as $\alpha>2$ and
$\ell_1\leq \frac{1}{3}.$ Thus, $ |l_1(\tilde x)|$ exceeds $M$ for given $s$, contrary to \eqref{lbdd}. Therefore, $\nu_R\geq 1.$\\

By decreasing induction on $k$, we assume that $J_k$ does not contain empty subintervals of the $R-$th level for $k=R, \cdots, n+1$ with
$3\leq n \leq R-1.$ Hence,
\begin{equation}\label{nu}
 2^{R-k} \leq \nu_k\leq 2^{R-k+1} \,\,\,\,\,\mbox{for}\,\,\, n+1\leq k\leq R.
\end{equation}

Thus, $I_n$ does not contain empty $I_{i,R}$. Let us show that the same is valid for $J_n.$

To obtain a contradiction, suppose that $J_n\supset I'_{j,R}$ with $m'_{j,R}=0.$ As above, take
$I'_R:=I'_{j,R} \subset I'_{R-1} \subset \cdots \subset I'_{n+1}\subset J_n$. Let $J'_k$ be adjacent to $I'_{k}$  for $n+1\leq k \leq R$ with $\nu'_k:=\#(J'_k \cap Z_s).$ Here and below, in order to distinguish numbers of points from $Z_s$ on intervals of the same level in $I_k$ and
$J_k$, we use the notation $m'$ for subintervals of $J_n.$ We can assume, by the induction hypothesis, that $m'_{i,R}\leq 1$ for each $I'_{i,R}\subset J_n.$
From this $\nu'_k\leq 2^{R-k}$ for $n+1\leq k\leq R.$

Fix $\tilde x\in I'_{R}\cap K^{\alpha}.$ Then
\begin{equation}\label{ell1}
 |l_1(\tilde x)|\geq \frac{h_{R-1}^{\nu'_R} h_{R-2}^{\nu'_{R-1}} \cdots h_n^{\nu'_{n+1}} h_{n-1}^{m_n-1} }
{\ell_R \ell_{R-1}^{\nu_R}\cdots \ell_n^{\nu_{n+1}} \ell_{n-1}^{m'_n} } \cdot \pi_2(n)
\end{equation}
with $m_n:=\#(Z_s\cap I_n)=2+ \sum_{k=n+1}^R \nu_k,\,m'_n:=\#(Z_s\cap J_n)=\sum_{k=n+1}^R \nu'_k,
\pi_2(n):= \prod_{x_i\notin I_{n-1}}\left|\frac{\tilde x - x_i}{x_1 - x_i}\right|.$

The fraction in \eqref{ell1} can be written as

$$ \frac{h_{n-1}}{\ell_R}\cdot \left[ \left( \frac{h_{R-1}}{\ell_{n-1}}\right)^{\nu'_R}\cdots \left( \frac{h_n}{\ell_{n-1}}\right)^{\nu'_{n+1}} \right]
\cdot \left[ \left( \frac{h_{n-1}}{\ell_{R-1}}\right)^{\nu_R}\cdots \left( \frac{h_{n-1}}{\ell_{n}}\right)^{\nu_{n+1}} \right].$$
Terms in the first square brackets are smaller than 1, so the minimum of $\left( \frac{h_{k-1}}{\ell_{n-1}}\right)^{\nu'_k}$ is attained for
the maximal degree, so $\nu'_k=2^{R-k}.$ On the other hand, to get minimum of the second brackets, we have take the minimal value of $\nu_k$ since
$h_{n-1}> \ell_k$ for $k\geq n.$ In this case, $m_n=2+m'_n=2^{R-n}+1.$ This choice of degrees will not disturb the estimate of $\pi_2(n).$

Returning to \eqref{ell1}, we get $ |l_1(\tilde x)|\geq \frac{\ell_{n-1}}{\ell_R} \cdot  \pi_1(n) \pi_2(n),$ where

\begin{equation}\label{ppii}
\pi_1(n)\geq \frac{h_{R-1}}{\ell_{R-1}}  \left(\frac{h_{R-2}}{\ell_{R-2}}\right)^2\cdots \left(\frac{h_{n-1}}{\ell_{n-1}}\right)^{2^{R-n}}>
\left(\frac{h_{n-1}}{\ell_{n-1}}\right)^{2^{R-n+1}}.
\end{equation}

By Lemma \ref{llh}, $\log \pi_1(n)\geq 2^{R-n+1} \log(1-\frac{2}{3^{\alpha^{n-2}}} )> 2^{R-n+1} \log\frac{7}{9}$ since $n\geq 3$ and $\alpha>2.$

Therefore, $\log \pi_1(n)\geq-2^R.$

On the other hand, as for $n=R$, we have
$\pi_2(n)= \prod \limits_{k=1}^{n-1} \prod_{x_i\in J_k}\left|\frac{\tilde x - x_i}{x_1 - x_i}\right|.$ If $n\geq 4$ then
$$ \log \pi_2(n)\geq \sum_{k=1}^{n-1} 2^{R-k+1} \log \left(1- \frac{\ell_{n-1}}{h_{k-1}}\right)\geq \sum_{k=1}^{n-1} 2^{R-k+2} \frac{\ell_{n-1}}{h_{k-1}}
\geq-2^{R-n+3}\geq -2^{R-1},$$
because, by Lemmas \ref{sum} and \ref{llh}, $  \sum_{k=1}^{n-1} \frac{1}{2^{k-1}h_{k-1}}\leq \frac{7}{2^{n-2}h_{n-2}}$ and
$\frac{\ell_{n-1}}{h_{n-2}}\leq \frac{1}{7}$ for $n\geq 4.$

In its turn, $\pi_2(3)= \prod_{x_i\in J_1} \cdot \prod_{x_i\in J_2} \left|\frac{\tilde x - x_i}{x_1 - x_i}\right|$ with smaller then $2^R+2^{R-1}$ terms.
For both products $\left|\frac{\tilde x - x_i}{x_1 - x_i}\right|\geq \frac{1}{3},$ as is easy to check. Hence,
$\log \pi_2(3)\geq -3\cdot 2^{R-1}\log 3>-2^{R+1}.$

Thus, $\log \pi_2(n)\geq -2^{R+1}$ for $n\geq 3.$

It is suffices to show that $ \frac{\ell_{n-1}}{\ell_R} \cdot  \pi_1(n) \pi_2(n)>M $ or $\log \frac{\ell_{n-1}}{\ell_R}> \log M+ 2^R + 2^{R+1}$
for $s\geq s_M$.

The left-hand side here is $(\alpha^{R-1}-\alpha^{n-2}) \log \frac{1}{\ell_1},$ which exceed $\alpha^{R-2}$ for $n\leq R-1.$
Of course, the right-hand side is smaller than $2^R(\log M+3).$ Due to the choice of $s_M$, the desired inequality is satisfied as $R\geq s.$\\

It remains to consider the cases $n\leq 2.$  For $n=2$, by \eqref{ppii}, $\pi_1(2)>\left(\frac{h_1}{\ell_1}\right)^{2^{R-1}}> 3^{-2^{R-1}}.$
Here $\pi_2(2)= \prod_{x_i\in J_1}\left|\frac{\tilde x - x_i}{x_1 - x_i}\right|$ contains $R$ terms with
$\left|\frac{\tilde x - x_i}{x_1 - x_i}\right|\geq \frac{h_0}{\ell_0}\geq \frac{1}{3},$ so
$ |l_1(\tilde x)|\geq \frac{\ell_{1}}{\ell_R} \cdot 3^{-2^{R-1}-2^R},$ which exceeds $M$ since 
$ \frac{\ell_{1}}{\ell_R} \cdot 3^{-2^{R-1}-2^R} = {\ell_1}^{(1-\alpha^{R-1})} \cdot 3^{-3\cdot 2^{R-1}} \geq 3^{(\alpha^{R-1}-3\cdot 2^{R-1}-1)}$, and for $\alpha >2$ (recall that $\ R \geq s$), the right hand side is as large as we wish for large enough $s \in \Bbb N$.

Similarly, if $n=1$ then $|l_1(\tilde x)|\geq \frac{\ell_{0}}{\ell_R} \cdot \pi_1(1)$ and $\pi_1(1)>\left(\frac{h_0}{\ell_0}\right)^{2^{R}}> 3^{-2^R}.$ So $|l_1(\tilde x)|$ exceeds $M$ for large enough $s$.\\

Therefore, the whole interval $I_{1,0}$ does not contain empty subintervals of the $R-$th level and $m_{1,0}\geq 2^R$ , which is impossible since
$R\geq s$ and $m_{1,0}=2^s-1.$
\end{proof}

\section{\bf Equilibrium sets $K(\gamma)$}

The third family of Cantor sets consists of quadratic generalized Julia sets introduced in \cite{WECS}. For the convenience of the reader we repeat
the relevant material. Given sequence $\gamma = (\gamma_s)_{s=1}^{\infty}$ with $0<\gamma_s < 1/4,$ let $\delta_s=\gamma_1\gamma_2 \cdots \gamma_s,\, r_0=1$
and $r_s=\gamma_s r_{s-1}^2$ for $s\in \mathbb{N}$.
Define $P_2(x)=x(x-1),\, P_{2^{s+1}}=P_{2^s}(P_{2^s}+r_s)$ and $ E_s=\{x\in {\mathbb{R}}:\,P_{2^{s+1}} (x)\leq 0\}$ for $s\in \mathbb{N}.$
Then $E_s=\cup_{j=1}^{2^s}I_{j,s}$ and $K(\gamma) := \cap_{s=0}^{\infty} E_s.$ By the construction,
\begin{equation}\label{pr}
|P_{2^s}(x)|\leq r_s \,\,\,\,\,\mbox {for}\,\,\, x\in E_s.
\end{equation}

Here the lengths $\ell_{j,s}$ of the intervals $I_{j,s}$ of the $s-$th level are not the same, but, provided the condition
\begin{equation}\label{gamma}
\gamma_k \leq 1/32\,\,\,\,\,\,\mbox{for}\,\,\,\, \,\,\,k\in {\Bbb N} \,\,\,\,\,\,
\mbox{and} \,\,\,\,\sum_{k=1}^{\infty} \gamma_k <\infty,
\end{equation}
by Lemma 6 in \cite{WECS}, we have
\begin{equation}\label{delta}
 \delta_s < \ell_{j,s} < C_0\,\delta_s \,\,\,\,\,\mbox {for}
\,\,\,\,\,\,\, 1\leq j\leq 2^s,
\end{equation}
where $C_0= \exp( 16\,\sum_{k=1}^{\infty} \gamma_k).$

In addition,  by Lemma 4 in \cite{WECS},
\begin{equation}\label{lll}
\ell_{i,s+1}<4 \gamma_{s+1} \ell_{j,s},
\end{equation}
if $I_{i,s+1}$ is a subinterval of $I_{j,s}$ and for $h_{j,s}:=\ell_{j,s}-\ell_{2j-1,s+1}- \ell_{2j,s+1}$  we have
\begin{equation}\label{hhh}
h_{j,s}> (1-4 \gamma_{s+1})\ell_{j,s}\geq 7/8 \cdot \ell_{j,s}\,\,\mbox{ for all}\,\, j\leq 2^s.
\end{equation}

Remarkable property of a non-polar set $K(\gamma)$ provided \eqref{gamma} is that its equilibrium measure coincides with the Cantor-Hausdorff measures
corresponding to this set, see T.8.3 in \cite{GU}.

Let us fix any basic interval $I_{j,s}$ for $s\geq 1.$ Then one its endpoint $y$ belongs to $Y_{s-1},$ whereas another is from $X_s.$
Since, by Lemma 4 (\cite{WECS}), the polynomial $P_{2^s}$ is convex on $I_{j,s}$, we can slightly modify Lemma 5 (\cite{WECS}):
\begin{lemma}\label{p}
Let $\gamma $ satisfy \eqref{gamma}, $t\in I_{j,s}$ for $1\leq s, 1\leq j \leq 2^s.$ Then
$$C_0^{-1} \, r_s/\delta_s < |P'_{2^s}(t)| \leq r_s/\delta_s.$$
\end{lemma}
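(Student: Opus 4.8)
The plan is to exploit the factorization $P_{2^{s}} = P_{2^{s-1}}(P_{2^{s-1}} + r_{s-1})$ together with the two-sided bound \eqref{delta} on interval lengths, and to imitate the proof of Lemma 5 of \cite{WECS} as adapted to the convexity of $P_{2^s}$ on $I_{j,s}$. First I would write $P_{2^s}(x) = \prod_{k=1}^{2^s}(x - y_{k,s})$, where the $y_{k,s}$ are the $2^s$ real zeros of $P_{2^s}$, i.e. exactly the two endpoints of each $I_{i,s}$ taken once (adjacent intervals share an endpoint only across levels, not within level $s$, so the zeros are the $2^s$ left-or-right endpoints that are mutually distinct; more precisely each $I_{i,s}$ has a left and right endpoint and these $2\cdot 2^s$ points include $2^s$ distinct zeros of $P_{2^s}$ since consecutive basic intervals of the same level are separated by a gap). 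Then for $t\in I_{j,s}$,
\[
|P'_{2^s}(t)| = \sum_{i=1}^{2^s}\ \prod_{k\neq i}|t - y_{k,s}| .
\]

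For the upper bound, since $P_{2^s}$ is convex on $I_{j,s}$ (Lemma 4 of \cite{WECS}), $|P'_{2^s}|$ attains its maximum on $I_{j,s}$ at an endpoint of $I_{j,s}$, and there all but one factor telescope: if $t=y$ is the endpoint of $I_{j,s}$ that is a zero, then $|P'_{2^s}(y)| = \prod_{k:\,y_{k,s}\neq y}|y - y_{k,s}|$. Each such distance is a sum of lengths of basic intervals of level $s$ together with the gaps between them; using \eqref{delta} one bounds the product of these distances, and the telescoping collapses it to $r_s/\delta_s$ exactly at the endpoint, because $|P_{2^s}(y')|=r_s$ at the endpoints $y'$ of certain intervals (from \eqref{pr} and the construction $P_{2^{s+1}}=P_{2^s}(P_{2^s}+r_s)$, the value $-r_s$ is attained). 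So the cleanest route for the upper bound is: $|P'_{2^s}(t)|\le |P'_{2^s}(\text{endpoint})|$ by convexity, and then evaluate at the endpoint directly, obtaining $r_s/\delta_s$. For the lower bound, I would again use convexity: on $I_{j,s}$ the derivative is monotone, so $|P'_{2^s}(t)| \ge |P'_{2^s}(\text{midpoint})|$, or more robustly, compare the product of distances from $t$ to all zeros with the product of distances from the endpoint, losing only the single factor corresponding to the far endpoint of $I_{j,s}$, which has length at least $\delta_s$ while the whole interval has length at most $C_0\delta_s$; this loses a factor of at most $C_0$, giving the factor $C_0^{-1}$ in the statement.

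The main obstacle is organizing the estimate of $\prod_{k\neq i}|t - y_{k,s}|$ cleanly, i.e. showing that dividing by the "missing" nearest zero and comparing to the telescoped product at an endpoint costs exactly the ratio of the length of $I_{j,s}$ to $\delta_s$, hence at most $C_0$ by \eqref{delta}. Concretely: write $|P_{2^s}(t)| = \prod_k |t-y_{k,s}|$; at the right endpoint $b$ of $I_{j,s}$ (say $b$ is the zero), $|P'_{2^s}(b)| = \prod_{k:y_k\neq b}|b-y_k|$; and for general $t\in I_{j,s}$, $|P'_{2^s}(t)|\ge \prod_{k:y_k\neq a}|t - y_k|$ where $a$ is whichever endpoint is a zero and is farther from $t$, and each factor $|t-y_k|$ with $y_k\notin I_{j,s}$ differs from $|b-y_k|$ by at most $\ell_{j,s}< C_0\delta_s$, a negligible relative perturbation once combined with the already-established comparison to $r_s/\delta_s$; the single interior factor $|t - (\text{other endpoint of }I_{j,s})|$ is at least the distance from $t$ to that endpoint, and choosing $a$ as the farther endpoint this is handled by the convexity monotonicity of $|P'_{2^s}|$ on $I_{j,s}$. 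I expect that in fact the quickest rigorous argument is: \emph{(i)} by convexity $|P'_{2^s}|$ is monotone on $I_{j,s}$, so its max is at an endpoint and its value at an endpoint-zero $y$ is $\prod_{y_k\neq y}|y-y_k|$, which equals $\lim_{x\to y}|P_{2^s}(x)|/|x-y|$; \emph{(ii)} evaluate using $|P_{2^s}|\le r_s$ on $E_s$ and $|P_{2^s}(y')| = r_s$ at appropriate neighboring endpoints, together with \eqref{delta}, to pin the endpoint value between $C_0^{-1}r_s/\delta_s$ and $r_s/\delta_s$; \emph{(iii)} conclude that for all $t\in I_{j,s}$ the same two-sided bound holds, since $|P'_{2^s}(t)|$ lies between the values of $|P'_{2^s}|$ at the two endpoints of $I_{j,s}$, both of which satisfy \eqref{delta}-type estimates. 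This reduces everything to the endpoint computation already essentially carried out in Lemma 5 of \cite{WECS}, and the only genuinely new ingredient is invoking convexity (Lemma 4 of \cite{WECS}) to pass from endpoints to an arbitrary $t\in I_{j,s}$.
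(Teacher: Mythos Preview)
Your approach is essentially the same as the paper's, which gives no detailed proof at all: the paper simply observes that one endpoint of $I_{j,s}$ lies in $Y_{s-1}$ (a zero of $P_{2^s}$) while the other lies in $X_s$, invokes convexity of $P_{2^s}$ on $I_{j,s}$ (Lemma~4 of \cite{WECS}), and declares the result a ``slight modification'' of Lemma~5 of \cite{WECS}. Your plan (i)--(iii) is precisely this modification spelled out.

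One point deserves care. In (i) you write ``by convexity $|P'_{2^s}|$ is monotone on $I_{j,s}$.'' Convexity gives only that $P'_{2^s}$ is monotone; for $|P'_{2^s}|$ to be monotone you also need $P'_{2^s}$ not to vanish on $I_{j,s}$, and if it did vanish the lower bound in the lemma would be false. This is true but requires a one-line check: from $P'_{2^s}=P'_{2^{s-1}}(2P_{2^{s-1}}+r_{s-1})$, the unique critical point of $P_{2^s}$ inside $I_{\lceil j/2\rceil,s-1}$ occurs where $P_{2^{s-1}}=-r_{s-1}/2$, and there $P_{2^s}=-r_{s-1}^2/4<-\gamma_s r_{s-1}^2=-r_s$ since $\gamma_s<1/4$, so the critical point lies in the gap, not in $I_{j,s}$. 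Once this is said, your step (iii) (sandwiching $|P'_{2^s}(t)|$ between the two endpoint values) is justified, and the endpoint values are handled by Lemma~5 of \cite{WECS} together with the mean-value estimate $|P'_{2^s}(\xi)|=r_s/\ell_{j,s}$ combined with \eqref{delta}.
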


Our next goal is to evaluate the Lebesgue constants $\Lambda_N(X, K(\gamma))$ for two cases.\\

\begin{lemma}\label{mon}
Let $\gamma $ satisfy \eqref{gamma} and $Y_{s-1}$ be the interpolating set. Then , for large enough $s$, the polynomial $l_{k, 2^s}$ is  monotone on $I_{j,s}$
for each $j,k \in \{1, \ldots, 2^s\}.$
\end{lemma}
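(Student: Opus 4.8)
The plan is to show that $l_{k,2^s}'$ has no zero inside any $I_{j,s}$ for $s$ large, by counting zeros of $\omega_{2^s}'$ and locating them. Recall that with interpolation set $Y_{s-1}$ we have $\omega_{2^s}(x)=\prod_{y\in Y_{s-1}}(x-y)$, and since $Y_{s-1}$ is exactly the set of $2^s$ endpoints of the intervals $I_{j,s}$, the natural identification is $\omega_{2^s}=c\,P_{2^s}(P_{2^s}+r_{s-1})=c\,P_{2^{s+1}}/\gamma_s$ up to a constant; more usefully, $\omega_{2^s}$ vanishes precisely at the two endpoints of each $I_{j,s}$. Thus $\omega_{2^s}'$ has, by Rolle, exactly one zero strictly between the two endpoints of each $I_{j,s}$ (that is one zero per basic interval, $2^{s-1}$ of them total... wait, $2^s$ intervals, so $2^s-1$ zeros of a degree $2^s-1$ polynomial, all accounted for by one interior zero in each of $2^s$ intervals minus — no: $\omega_{2^s}'$ has degree $2^s-1$; the $2^s$ roots of $\omega_{2^s}$ are grouped into $2^s$ distinct points, Rolle gives a zero of $\omega_{2^s}'$ in each of the $2^s-1$ gaps between consecutive points, which uses up all $2^s-1$ zeros). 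Hence \emph{no} zero of $\omega_{2^s}'$ lies in the open interior of any $I_{j,s}$: the sole zero attributable to the "gap" at $I_{j,s}$ sits in the complementary gaps. Therefore $\omega_{2^s}'$ is of one sign on each open $I_{j,s}$, but that alone is not enough because $l_{k,2^s}(x)=\omega_{2^s}(x)/((x-y_k)\omega_{2^s}'(y_k))$ and $l_{k,2^s}'(x)$ is not simply $\omega_{2^s}'$.

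First I would write $l_{k,2^s}'(x)=\dfrac{\omega_{2^s}'(x)(x-y_k)-\omega_{2^s}(x)}{(x-y_k)^2\,\omega_{2^s}'(y_k)}$, so monotonicity of $l_{k,2^s}$ on $I_{j,s}$ is equivalent to the numerator $g(x):=\omega_{2^s}'(x)(x-y_k)-\omega_{2^s}(x)$ having no sign change on $I_{j,s}$. For $y_k$ one of the two endpoints of $I_{j,s}$ (the "near" case), $g(y_k)=-\omega_{2^s}(y_k)+0=0$ as well, and we must control $g$ on the rest of $I_{j,s}$. For $y_k\notin I_{j,s}$ (the "far" case), neither $\omega_{2^s}$ nor $\omega_{2^s}'$ vanishes on the open interior, and one estimates the ratio $\omega_{2^s}(x)/(\omega_{2^s}'(x)(x-y_k))$: this is $\sum_{y\in Y_{s-1}}\frac{x-y_k}{x-y}$ wait, rather $\frac{1}{x-y_k}\cdot\frac{\omega_{2^s}(x)}{\omega_{2^s}'(x)}=\frac{1}{x-y_k}\big/\sum_{y\in Y_{s-1}}\frac{1}{x-y}$, and one shows its modulus is $<1$ on $I_{j,s}$ using that the contribution of the two endpoints of $I_{j,s}$ to $\sum 1/(x-y)$ dominates: those two terms have the same sign (since $x$ is inside) and each has modulus $\geq 1/\ell_{j,s}\gg 1/\delta_{s-1}$, whereas $1/|x-y_k|\leq 1/h_{\cdot}$ is controlled by the length of the parent interval, which by \eqref{delta} is comparable to $\delta_{s-1}\gg\delta_s$. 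So the single term $1/(x-y_k)$ is negligible compared to the near-endpoint terms, giving $|l_{k,2^s}(x)|<1$ and, more to the point, forcing $g$ not to vanish.

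The cleaner route, which I would actually follow, is: $l_{k,2^s}'$ has degree $2^s-2$, hence at most $2^s-2$ real zeros. By the general theory (e.g. the arrangement of zeros of derivatives of fundamental Lagrange polynomials), $l_{k,2^s}'$ has exactly one zero strictly between each pair of consecutive nodes $y_i<y_{i+1}$ with $i,i+1\neq k$... actually between consecutive nodes excluding the node $y_k$ itself one gets $2^s-2$ sign changes of $l_{k,2^s}$, accounting for all zeros of $l_{k,2^s}'$. Consequently all zeros of $l_{k,2^s}'$ lie in gaps \emph{between} consecutive nodes; since each basic interval $I_{j,s}$ has the two nodes $y_{2j-1},y_{2j}$ as its endpoints and \emph{no} node in between (for $s$ large enough that $Y_{s-1}\subset K(\gamma)$ places exactly two $Y_{s-1}$-points per level-$s$ interval), the open interior of $I_{j,s}$ contains at most one zero of $l_{k,2^s}'$ — namely the one associated to the gap $(y_{2j-1},y_{2j})$ when $k\notin\{2j-1,2j\}$. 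I would then rule this one out by the sign argument above: on $(y_{2j-1},y_{2j})$ the function $\sum_{i}1/(x-y_i)$ is dominated by its $i=2j-1$ and $i=2j$ terms (these blow up with the same sign at the endpoints and their sum, being a convex-type expression, has a single zero inside which would-be produce a pole of $l_{k,2^s}$, not of $l_{k,2^s}'$) — more precisely I'd show $\omega_{2^s}'$ itself vanishes once in $(y_{2j-1},y_{2j})$ (the Rolle point), and at that point $g(x)=-\omega_{2^s}(x)\neq0$, and then a derivative/degree count on $g$ restricted to $\overline{I_{j,s}}$ shows $g$ has no zero there.

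The main obstacle is the near case $y_k\in\{y_{2j-1},y_{2j}\}$: there $l_{k,2^s}$ has a double structure at that endpoint and the naive sign count loses a zero. Here I would use Lemma \ref{p} and the convexity of $P_{2^s}$ on $I_{j,s}$ (from \cite{WECS}, Lemma 4): write $\omega_{2^s}$ near $I_{j,s}$ as $P_{2^s}\cdot Q$ where $Q(x)=\prod_{y\in Y_{s-1}\setminus\{y_{2j-1},y_{2j}\}}(x-y)$ varies slowly (logarithmic derivative $O(1/\delta_{s-1})$), so $l_{k,2^s}$ behaves like a multiple of $P_{2^s}(x)/(x-y_k)$ times a near-constant factor; since $P_{2^s}$ is monotone on each of the two halves of $I_{j,s}$ with a single critical point, and dividing by $(x-y_k)$ with $y_k$ an endpoint preserves monotonicity for a convex function vanishing at that endpoint, one gets monotonicity of the leading factor, and the slowly-varying correction $Q$ cannot create a critical point once $\delta_s/\delta_{s-1}\leq 4\gamma_s\leq 1/8$ is small enough (this is where "large enough $s$", or rather \eqref{gamma}, enters, through $\sum\gamma_k<\infty$ and $C_0$). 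I expect the bookkeeping of this perturbation estimate — bounding $|l_{k,2^s}'|$ below away from zero using $|P_{2^s}'|\asymp r_s/\delta_s$ from Lemma \ref{p} against the error term $|P_{2^s}|\,|Q'/Q|\leq r_s\cdot O(1/\delta_{s-1})$ and noting $\delta_s\ll\delta_{s-1}$ — to be the technical heart of the argument.
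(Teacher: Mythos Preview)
Your proposal rests on a misidentification of the node set. You write that ``$Y_{s-1}$ is exactly the set of $2^s$ endpoints of the intervals $I_{j,s}$,'' and hence that $\omega_{2^s}$ vanishes at both endpoints of each $I_{j,s}$. This is wrong: $Y_{s-1}$ consists of the $2^s$ endpoints of the $2^{s-1}$ intervals $I_{m,s-1}$ of level $s-1$ (recall $\#(Y_{s-1})=2^s$, while the level-$s$ intervals have $2^{s+1}$ endpoints). These $2^s$ points are precisely the zeros of $P_{2^s}$, since $E_{s-1}=\{P_{2^s}\le 0\}$. Thus $\omega_{2^s}$ is a constant multiple of $P_{2^s}$ itself, with no extra factor $Q$, and
\[
l_{k,2^s}(x)=\frac{P_{2^s}(x)}{(x-x_k)\,P'_{2^s}(x_k)}.
\]
Each $I_{j,s}$ contains exactly \emph{one} node $x_j\in Y_{s-1}$ (the endpoint it shares with its parent $I_{m,s-1}$), not two. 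Your Rolle zero-count, the factorisation $\omega_{2^s}=P_{2^s}\cdot Q$, and the ``near case with two endpoints'' analysis all collapse under this correction.

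Once the formula above is in hand, the paper's argument is short and direct. Writing the numerator of $l_{k,2^s}'(x)$ as $g(x)=P'_{2^s}(x)(x-x_k)-P_{2^s}(x)$, one splits into three cases. If $I_{j,s}$ is not contained in the parent interval $I_{m,s-1}\ni x_k$, then $|x-x_k|\ge h_{q,s-2}\gtrsim \delta_{s-2}$, so by Lemma~\ref{p} one has $|P'_{2^s}(x)(x-x_k)|\ge C_0^{-1}(r_s/\delta_s)\cdot h_{q,s-2}\ge r_s$ for large $s$ (since $\gamma_s\to 0$), while $|P_{2^s}(x)|\le r_s$ by \eqref{pr}; hence $g$ has constant sign on $I_{j,s}$. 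If $j=k$, write $P_{2^s}(x)=P'_{2^s}(\xi)(x-x_k)$ for some $\xi$ between $x$ and $x_k$, so $|l_{k,2^s}(x)|=|P'_{2^s}(\xi)/P'_{2^s}(x_k)|$, which is monotone by the convexity of $P_{2^s}$ on $I_{j,s}$. For the remaining adjacent interval inside $I_{m,s-1}$, a direct sign check on the three factors $P'_{2^s}(x)$, $x-x_k$, $P_{2^s}(x)$ suffices. Your perturbation estimate with a slowly varying $Q$ is not needed; the whole point is that there is no $Q$.
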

\begin{proof}
The set $Y_{s-1}$  consists of $2^s$ endpoints of basic intervals of levels $\leq s-1.$ Let us fix
$x_k\in I_{k,s}\subset I_{m,s-1}\subset I_{q,s-2}.$ Thus, $l_{k, 2^s}(x)=\frac{P_{2^s}(x)}{(x-x_k) P'_{2^s}(x_k)}.$
Let us assume that $k=2m,$ so $x_k$ is the right endpoint of $I_{m,s-1}$ and $P'_{2^s}(x_k)>0.$  Fix $I_{j,s}$.

First we suppose that $I_{j,s}$ is not a subinterval of $I_{m,s-1}$. Of course, $l'_{k, 2^s}(x)=\frac{P'_{2^s}(x)(x-x_k)-P_{2^s}(x)}{(x-x_k)^2 P'_{2^s}(x_k)}.$
We show that the numerator of this fraction does not change its sign on $I_{j,s}$. Let $x\in I_{j,s}$. By Lemma \eqref{p},
$|P'_{2^s}(x)| >C_0^{-1} \, r_s/\delta_s.$ By \eqref{hhh}, \eqref{lll}, and \eqref{delta},
$|x-x_k|\geq h_{q,s-2}\geq \frac{7}{8} \cdot \ell_{q,s-2}\geq  \frac{7}{8}  \cdot\frac{1}{16}  \frac{\delta_s}{\gamma_{s-1} \gamma_s},$ which exceeds
$C_0 \delta_s,$ as $\gamma_s \to 0.$ Hence, $|P'_{2^s}(x)|\cdot |x-x_k|\geq r_s$ for large enough $s$. On the other hand, by \eqref{pr}, $|P_{2^s}(x)|\leq r_s,$
as desired.

For $x\in I_{k,s}$ we use Taylor's expansion: $P_{2^s}(x)= P'_{2^s}(\xi)(x-x_k)$ with $x<\xi <x_k$. Here,
$|l_{k, 2^s}(x)|= \left|\frac{P'_{2^s}(\xi)}{P'_{2^s}(x_k)}\right|$ increases to 1 by convexity of $P_{2^s}$.

Let $j=k-1.$  Here all three values $P'_{2^s}(x), x-x_k, P_{2^s}(x)$ are negative, so  $l'_{k, 2^s}(x)>0.$

The same proof remains valid for $k=2m-1,$ except the case of the adjacent interval with $j=k+1.$ Now,  $P'_{2^s}(x)$ and $x-x_k$ are positive.
\end{proof}

\begin{theorem}\label{bdd2}
Let $\gamma $ satisfy \eqref{gamma}, then the sequence $(\Lambda_{2^s}(Y_{s-1}, K(\gamma)))_{s=1}^{\infty}$ is bounded.
\end{theorem}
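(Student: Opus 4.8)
The plan is to compute $\lambda_{2^s}(x)$ directly from the polynomial structure of $K(\gamma)$ and to bound it uniformly in $x\in K(\gamma)$ and in $s$. The point of departure is that $Y_{s-1}$, the set of the $2^s$ endpoints of the level-$(s-1)$ intervals, is precisely the zero set of the monic polynomial $P_{2^s}$; hence $\omega_{2^s}=P_{2^s}$ and, for $x_k\in Y_{s-1}$,
$$ |l_{k,2^s}(x)|=\frac{|P_{2^s}(x)|}{|x-x_k|\,|P'_{2^s}(x_k)|},\qquad \lambda_{2^s}(x)=|P_{2^s}(x)|\sum_{k=1}^{2^s}\frac{1}{|x-x_k|\,|P'_{2^s}(x_k)|}. $$
By \eqref{pr} the numerator is at most $r_s$ on $E_s\supset K(\gamma)$, and since each $x_k\in Y_{s-1}$ is an endpoint of some level-$s$ interval, Lemma \ref{p} gives $|P'_{2^s}(x_k)|>C_0^{-1}r_s/\delta_s$. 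If $x\in Y_{s-1}$ then $\lambda_{2^s}(x)=1$, so we may assume $x\notin Y_{s-1}$; Lemma \ref{mon} would allow a further reduction to $x\in Y_s$, but this is not needed.

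Fix such an $x$ and take the chain $x\in I^{(s)}\subset I^{(s-1)}\subset\cdots\subset I^{(0)}=[0,1]$ of basic intervals containing it; for $1\le n\le s$ let $J_n$ denote the level-$n$ subinterval of $I^{(n-1)}$ that does not contain $x$. A level-$n$ basic interval carries exactly $2^{s-n}$ points of $Y_{s-1}$, so $I^{(s)}$ contains exactly one node of $Y_{s-1}$ (an endpoint of $I^{(s)}$) and $J_n$ contains $2^{s-n}$ of them; these partition $Y_{s-1}$, and $1+\sum_{n=1}^{s}2^{s-n}=2^s$. For the single node $x_{k_0}\in I^{(s)}$ I would use the mean value theorem: $P_{2^s}(x)=P_{2^s}(x)-P_{2^s}(x_{k_0})=P'_{2^s}(\xi)(x-x_{k_0})$ with $\xi$ between $x$ and $x_{k_0}$, hence $\xi\in I^{(s)}$, so $|P_{2^s}(x)|/|x-x_{k_0}|=|P'_{2^s}(\xi)|\le r_s/\delta_s$ by Lemma \ref{p}, and the corresponding term of $\lambda_{2^s}(x)$ is at most $C_0$. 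For a node $x_k\in J_n$ the points $x$ and $x_k$ occupy the two distinct level-$n$ halves of $I^{(n-1)}$, which are separated by a gap of length $h_{j,n-1}>\tfrac78\ell_{j,n-1}>\tfrac78\delta_{n-1}$ by \eqref{hhh} and \eqref{delta}; together with the estimates above this gives $|l_{k,2^s}(x)|\le\tfrac{8C_0}{7}\,\delta_s/\delta_{n-1}$.

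Summing over all nodes yields $\lambda_{2^s}(x)\le C_0+\tfrac{8C_0}{7}\sum_{n=1}^{s}2^{s-n}\,\delta_s/\delta_{n-1}$, and the crucial observation is that $2^{s-n}\,\delta_s/\delta_{n-1}=\tfrac12\prod_{j=n}^{s}(2\gamma_j)\le\tfrac12(1/16)^{\,s-n+1}$ since $\gamma_j\le 1/32$ by \eqref{gamma}; the geometric series then bounds $\lambda_{2^s}(x)$, and hence $\Lambda_{2^s}(Y_{s-1},K(\gamma))$, by a constant independent of $s$. I expect the only genuinely delicate step to be the contribution of the node nearest $x$: there one cannot estimate $|P_{2^s}(x)|$ and $1/|x-x_k|$ separately, since the latter is unbounded, and it is precisely the mean value theorem together with the two-sided control of $|P'_{2^s}|$ from Lemma \ref{p} that turns this term into $C_0$; the rest is routine geometric bookkeeping powered by the smallness condition $\gamma_k\le 1/32$. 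As an alternative, one could invoke Lemma \ref{mon} to push each $\sup_{K}|l_{k,2^s}|$ onto an endpoint of a level-$s$ interval and thus estimate $\lambda_{2^s}$ only at the new level-$s$ endpoints, where the nearest node is automatically the other endpoint of that interval; the computation is then essentially the same.
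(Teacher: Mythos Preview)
Your proof is correct and follows essentially the same route as the paper: the chain decomposition, the gap bound $|x-x_k|\ge h_{j,n-1}>\tfrac78\delta_{n-1}$, and the geometric-series estimate are identical. The only difference is cosmetic: for the nearest node the paper invokes Lemma~\ref{mon} to obtain $|l_1(x)|\le 1$, whereas you bypass that lemma and use the mean value theorem together with the two-sided bound of Lemma~\ref{p} to get $|l_{k_0}(x)|\le C_0$, which is slightly weaker but saves the appeal to monotonicity.
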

\begin{proof}
Fix $x\in K(\gamma)$ and, as in \eqref{x}, the intervals $x\in  I_{k_0,s}\subset I_{k_1,s-1}\subset \cdots  \subset I_{k_s,0}=[0,1].$
We proceed to estimate $\lambda_{2^s}(x)$ from above.

Let us enumerate $(x_k)_{k=1}^{2^s}$ in order of increasing distance to $x$. As above, $J_{k_i,s-i}$ and $I_{k_i,s-i}$ are adjacent for $0\leq i \leq s-1.$
We notice that $J_{k_i,s-i}$ contains $2^i$  points from $Y_{s-1}$. By Lemma \eqref{mon}, $|l_1(x)|\leq 1.$ Here, $x_2\in  J_{k_0,s}.$ Then $|x-x_2|\geq h_{k_1,s-1}.$ Arguing as in Lemma \eqref{mon},  we have
 $|l_2(x)|=\left|\frac{P_{2^s}(x)}{(x-x_2) P'_{2^s}(x_2)}\right|\leq \frac{C_0\,\delta_s}{h_{k_1,s-1}}.$ Similarly, for each from $2^i$ interpolating points from
$J_{k_i,s-i}$, let $x_m,$ we have $|l_{m}(x)|\leq \frac{C_0\,\delta_s}{h_{k_{i+1},s-i-1}}.$

Combining these inequalities gives $\lambda_{2^s}(x)\leq 1+ C_0\,\delta_s \sum_{i=1}^s \frac{2^{i-1}}{h_{k_i,s-i}}.$ By \eqref{hh} and \eqref{delta},
$h_{k_i,s-i}\geq  \frac{7}{8} \delta_{s-i}.$ Hence, $\lambda_{2^s}(x)\leq 1+ \frac{8}{7}\, C_0\,\sum_{i=1}^s 2^{i-1} \gamma_s \cdots \gamma_{s-i+1}< 1+ \frac{4 C_0}{105},$
by \eqref{gamma}.
\end{proof}

As above, the behavior of the Lebesgue constants changes drastically when we reduce the number of nodes by one. Then the corresponding sequence has at least linear growth.

\begin{theorem}\label{notbdd}
Let $N=2^s-1$ and  $X$ consist of all points $Y_{s-1}$ except one. Then $\Lambda_{N}(X, K(\gamma))> C_0^{-1}\,(N-2).$
\end{theorem}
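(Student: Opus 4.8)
The plan is to exhibit a single fundamental polynomial $l_{k,N}$ that is as large as $C_0^{-1}(N-2)$ at a suitably chosen point, which immediately gives the lower bound $\Lambda_N(X,K(\gamma))\ge |l_{k,N}(\tilde x)|$. Since $X=Y_{s-1}\setminus\{y_0\}$ for some endpoint $y_0$, exactly one basic interval of the $s$-th level — say $I_{j,s}$ — contains no node of $X$ (the level-$s$ intervals each carry precisely one endpoint from $Y_{s-1}$, namely one of their two endpoints, and removing $y_0$ empties exactly one of them). Fix $\tilde x\in I_{j,s}\cap K(\gamma)$, and let $x_k$ be the node of $X$ lying in the sibling interval $I_{j',s}$ sharing the parent $I_{m,s-1}$ with $I_{j,s}$. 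The idea is that this particular $x_k$ is the closest node to $\tilde x$, at distance roughly $h_{m,s-1}\sim\delta_{s-1}$, whereas the polynomial $P_{2^s}$ controlling $l_{k}$ still manages to be comparable to $r_s$ on $I_{j,s}$.

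The key computation: write $l_{k,N}(\tilde x)=\dfrac{\omega_N(\tilde x)}{(\tilde x-x_k)\,\omega_N'(x_k)}$ where $\omega_N(x)=\prod_{x_i\in X}(x-x_i)$. Since $X=Y_{s-1}\setminus\{y_0\}$ and $y_0$ is an endpoint of $I_{j,s}$, we have $\prod_{x_i\in Y_{s-1}}(x-x_i)=c\cdot P_{2^s}(x)$ for a constant $c$ (both are monic-up-to-scalar of degree $2^s$ with the same roots $Y_{s-1}$), so $\omega_N(x)=P_{2^s}(x)/\big(c^{-1}(x-y_0)\big)$ up to the constant $c$; concretely $\omega_N(\tilde x)$ and $\omega_N'(x_k)$ are $P_{2^s}(\tilde x)/(\tilde x-y_0)$ and $P_{2^s}'(x_k)/(x_k-y_0)$ respectively (after dividing out $(x-y_0)$). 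Now estimate: $|P_{2^s}(\tilde x)|$ — here I would use the Taylor/convexity argument as in Lemma \ref{mon}, $P_{2^s}(\tilde x)=P_{2^s}'(\xi)(\tilde x-y_0)$ with $\xi$ between $\tilde x$ and $y_0$, giving $|\omega_N(\tilde x)|=|P_{2^s}(\tilde x)/(\tilde x-y_0)|=|P_{2^s}'(\xi)|\ge C_0^{-1} r_s/\delta_s$ by Lemma \ref{p}; for the denominator, $|x_k-y_0|\le \ell_{m,s-1}<C_0\delta_{s-1}$ by \eqref{delta}, $|P_{2^s}'(x_k)|\le r_s/\delta_s$ by Lemma \ref{p}, and $|\tilde x-x_k|\le\ell_{m,s-1}<C_0\delta_{s-1}$ as well. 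Assembling,
$$
|l_{k,N}(\tilde x)|=\frac{|P_{2^s}'(\xi)|\cdot|x_k-y_0|}{|\tilde x-x_k|\cdot|P_{2^s}'(x_k)|}\ \ge\ \frac{C_0^{-1}(r_s/\delta_s)\cdot|x_k-y_0|}{C_0\delta_{s-1}\cdot(r_s/\delta_s)}\ =\ \frac{|x_k-y_0|}{C_0^2\,\delta_{s-1}}.
$$
This is only of order $C_0^{-2}$, not of order $N$. So the single-polynomial bound as written is not enough, and the real argument must instead sum several $|l_{k,N}(\tilde x)|$: the point $\tilde x$ sees all $2^{s-1}-1$ (or so) nodes in the sibling half $I_{j',s-1}$, wait — more carefully, I would sum over all nodes $x_k$ lying in the parent $I_{m,s-1}$ and show each contributes at least a fixed constant, giving a sum $\gtrsim (\text{number of such nodes})\ge N-2$ after dividing by $C_0$; the factor $N-2$ in the statement suggests that essentially all but two nodes are counted, i.e. one shows $\lambda_N(\tilde x)\ge C_0^{-1}(N-2)$ by a lower bound $|l_{k,N}(\tilde x)|\ge C_0^{-1}$ valid for all but two indices $k$.

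Thus the actual plan is: for $\tilde x\in I_{j,s}\cap K(\gamma)$ with $I_{j,s}$ the empty level-$s$ interval, prove $|l_{k,N}(\tilde x)|\ge C_0^{-1}$ for every $k$ with $x_k\notin\{$the two nodes in some excluded pair$\}$, by the estimate $|l_{k,N}(\tilde x)|=\big|\frac{P_{2^s}(\tilde x)}{(\tilde x-y_0)}\big|\cdot\big|\frac{x_k-y_0}{(\tilde x-x_k)P_{2^s}'(x_k)}\big|$ and bounding $|P_{2^s}(\tilde x)/(\tilde x-y_0)|\ge C_0^{-1}r_s/\delta_s$ via Lemma \ref{p} and convexity, $|P_{2^s}'(x_k)|\le r_s/\delta_s$ via Lemma \ref{p}, and $|x_k-y_0|\ge|\tilde x-x_k|\cdot C_0^{-1}$ — the last being the crucial geometric fact that for $x_k$ outside a small neighborhood, the distance from $x_k$ to the removed endpoint $y_0$ is comparable (up to $C_0$, using \eqref{delta}) to the distance from $x_k$ to $\tilde x$, since $y_0$ and $\tilde x$ both lie in the same tiny interval $I_{j,s}$ of length $<C_0\delta_s$ while $|x_k-\tilde x|$ is bounded below by some $h_{\cdot,\cdot}\gtrsim\delta_{\cdot}\gg\delta_s$. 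Then $\Lambda_N\ge\lambda_N(\tilde x)\ge\sum_{k}|l_{k,N}(\tilde x)|\ge C_0^{-1}(N-2)$. The main obstacle is handling the few nodes $x_k$ very close to $\tilde x$ (those in the sibling interval $I_{j',s}$, at distance only $\sim h_{m,s-1}$): for these the ratio $|x_k-y_0|/|\tilde x-x_k|$ need not be $\gtrsim C_0^{-1}$, so one must either check directly that even these contribute a bounded-below amount (again via the convexity of $P_{2^s}$ and Lemma \ref{p}, since $|x_k-y_0|$ and $|\tilde x - x_k|$ are both of order $\delta_{s-1}$, their ratio is bounded below by $C_0^{-1}$ by \eqref{delta}!) — in fact \eqref{delta} does give $|x_k-y_0|\ge\delta_{s-1}>C_0^{-1}\ell_{m,s-1}\ge C_0^{-1}|\tilde x-x_k|$ whenever $x_k\in I_{m,s-1}$, and for $x_k\notin I_{m,s-1}$ the same holds with $\delta$ at a higher level — or else simply discard at most two such indices, which is exactly why the bound reads $N-2$ rather than $N$.
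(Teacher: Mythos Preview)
Your overall strategy—represent $\omega_N$ through $P_{2^s}$, invoke Lemma \ref{p}, and sum the $|l_k(\tilde x)|$—is exactly the paper's, but you miss the one simplification that makes the argument a three-line proof: take $\tilde x$ to be the removed node $y_0$ itself. Since $y_0\in Y_{s-1}\subset K(\gamma)$, this is a legitimate evaluation point, and with $\tilde x=y_0$ the troublesome ratio $|x_k-y_0|/|\tilde x-x_k|$ is identically $1$. No mean-value step is needed either: for each $x_j\in X$ one multiplies numerator and denominator of $|l_j(y_0)|=\prod_{i\ne j,k}\bigl|\tfrac{y_0-x_i}{x_j-x_i}\bigr|$ by $|y_0-x_j|$ to obtain
\[
|l_j(y_0)|=\frac{|P'_{2^s}(y_0)|}{|P'_{2^s}(x_j)|}>C_0^{-1}
\]
directly from Lemma \ref{p}. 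Summing over all $j$ gives the bound.

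Your variant with a generic $\tilde x\in I_{j,s}$ can be made to work, but it does not recover the stated constant: the ratio $|x_k-y_0|/|\tilde x-x_k|$ is only $\ge 1-\ell_{j,s}/h_{m,s-1}>6/7$ (via \eqref{lll} and \eqref{hhh}), or $\ge C_0^{-1}$ if you use \eqref{delta} as you suggest, so you land at $\tfrac{6}{7}C_0^{-1}N$ or $C_0^{-2}N$ rather than $C_0^{-1}(N-2)$. Your conjecture that the $N-2$ arises from discarding two bad indices is therefore off the mark—in the paper's argument every index contributes at least $C_0^{-1}$ and none is discarded; the $-2$ is just slack in the write-up.
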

\begin{proof}
Let $x_k$ be the eliminated node, so $X=Y_{s-1}\setminus \{x_k\}.$ Then  $\lambda_N(x_k)=\sum_{j=1,j\ne k}^N |l_j(x_k)|$ with
$|l_j(x_k)|=\prod \left|\frac{x_k-x_i}{x_j-x_i}\right|,$ where the product is taken for $i\in\{1, \ldots,N\}\setminus \{j,k\}.$
Multiplying both parts of the product by $|x_k-x_j|$ gives the representation  $|l_j(x_k)|=\left|\frac{P'_{2^s}(x_k)}{P'_{2^s}(x_j)}\right|.$
By Lemma \eqref{p}, $|l_j(x_k)|>C_0^{-1},$ which proves the theorem.
\end{proof}

\bibliographystyle{spmpsci}

\end{document}